\documentclass[11pt,oneside]{article}
\usepackage[utf8]{inputenc}
\usepackage[english]{babel}
\usepackage{amsmath}
\usepackage{amsfonts}
\usepackage{amssymb}
\usepackage{amsthm}
\usepackage{tikz-cd} 
\usepackage{hyperref}
\usepackage{pgfplots}
\usetikzlibrary{calc}

\usepackage[left=3cm,right=3cm,top=3cm,bottom=3cm]{geometry}
\usepackage[title]{appendix}
\usepackage[]{algorithm2e}

\usepackage[normalem]{ulem}
\useunder{\uline}{\ul}{}
\theoremstyle{plain}
\newtheorem{teo}{}[section]
\newtheorem{prop}[teo]{Proposition}
\newtheorem{cor}[teo]{Corollary}

\newtheorem{lem}[teo]{Lemma}
\newtheorem{thm}[teo]{Theorem}
\theoremstyle{definition}
\newtheorem{ex}[teo]{Example}
\newtheorem{rem}[teo]{Remark}
\newtheorem{df}[teo]{Definition}
\usepackage{graphicx} 

\newcommand\blfootnote[1]{%
  \begingroup
  \renewcommand\thefootnote{}\footnote{#1}%
  \addtocounter{footnote}{-1}%
  \endgroup
}

\title{On the existence and properties of Alexandroff paratopological groups}
\author{Tayomara Borsich and Pedro J. Chocano}
\date{}

\begin{document}

\maketitle

\begin{abstract} 

We study groups endowed with Alexandroff topologies and show that no non-discrete Alexandroff topology can turn a group into a topological group. This settles negatively the basic existence problem for Alexandroff topological groups. Motivated by this obstruction, we turn to the broader setting of Alexandroff paratopological groups. We establish several fundamental properties of these spaces and provide explicit non-compact $T_0$ examples, showing that the Alexandroff framework is rich enough to capture nontrivial paratopological phenomena. As applications, we address two classical open questions concerning feebly bounded subsets in paratopological groups, proving that non-compact Alexandroff paratopological groups offer a positive solution both for products of feebly bounded sets and for the feebly boundedness of $B^2$ when $B$ is a feebly bounded subset. 
\end{abstract}

\blfootnote{2020  Mathematics  Subject  Classification: 54H11, 	22A15 , 06A11.}
\blfootnote{Keywords: paratopological groups, Alexandroff spaces, partially ordered sets, product, feebly compact, feebly bounded.}
\blfootnote{This research is partially supported by Grant  PID2021-126124NB-100 from Ministerio de Ciencia, Innovación y Universidades (Spain) and  2025/SOLCON-159637 from Rey Juan Carlos University}

\section{Introduction}

The theory of finite topological spaces, or more broadly Alexandroff spaces, has proven to be a powerful tool for addressing problems of various kinds in mathematics. For example, there has been recent interest in applying these spaces to the study of dynamical systems (see \cite{barmak2024conley} or \cite{ChocanoMoronRuizdelPortal2025} and the references therein). They also appear in classical theoretical problems, such as Quillen’s conjecture (see \cite[Chapter~8]{barmak2011algebraic}).

In \cite{chocanoPrieto2026negativedimension}, the authors construct a group whose elements encode simplicial complexes of both negative and positive dimension and which also carry a natural topological structure. This topological structure is, in fact, an Alexandroff topology. However, the group structure is not compatible with this topology, and therefore the resulting object is neither a topological group nor a paratopological group. This observation motivates the following question, which is the starting point of the present work: can a group admit a non-discrete Alexandroff topology that turns it into a topological group? And in the affirmative case, what properties should such a group satisfy?

In this paper we answer the question in the negative: the only connected Alexandroff topology that turns a group into a topological group is the discrete topology on the one-point space. This motivates us to consider instead the weaker notion of paratopological groups equipped with an Alexandroff topology. We show, through explicit examples, that such spaces do exist. Moreover, we prove that imposing compactness forces the topology to be discrete once again.

Although we restrict our attention to non-compact, $T_0$ Alexandroff paratopological groups, these spaces already exhibit interesting and sometimes unexpected behavior. In particular, this family can be used both to characterize certain classical properties appearing in the literature and to provide counterexamples to natural open questions. For instance, we will be able to address the following problem: given paratopological groups $G_1$ and $G_2$ and bounded subsets $B_i \subseteq G_i$ for $i = 1,2$, is the product $B_1 \times B_2$ necessarily a bounded subset of the product group $G_1 \times G_2$? This question has been posed repeatedly in the literature; see, for example, \cite[Open Problem~6.10.1]{TArhangelskiiTkachenko2008} and \cite[Problem~7.1]{tkachenko2014paratopological}, where the author remarks that \emph{``Almost nothing is known about products of bounded sets in paratopological groups.''} In particular, we will show that for any family of non-compact Alexandroff paratopological groups $X_\alpha$ with $\alpha \in I$ and bounded subsets $B_\alpha \subseteq X_\alpha$, the product $\prod_{\alpha \in I} B_\alpha$ is a bounded subset of $\prod_{\alpha \in I} X_\alpha$.

Another illustrative example of the usefulness of these spaces concerns \cite[Open Problem~6.10.2]{TArhangelskiiTkachenko2008}: if $B$ is a bounded subset of a paratopological group $G$, must $B^2$ also be bounded? We will show that within our setting, if $X$ is a non-compact Alexandroff paratopological group and $B \subseteq X$ is bounded, then indeed $B^2$ is bounded.

This paper is intended as the first in a series devoted to other open problems in the literature and to further properties arising in this context. As an initial step in this direction, we adapt certain definitions from \cite{sanchez2013paratopologicos} to our framework in order to simplify some results and statements obtained there. Moreover, we provide a first contribution toward the classification of Alexandroff paratopological spaces, at least from a topological viewpoint.

The organization of the paper is as follows. In Section~\ref{sec_intro}, we introduce the notation, terminology, and basic results required to keep the paper as self-contained as possible. Section~\ref{sec_existencia_primeras_propiedades} is devoted to establishing the triviality of Alexandroff topological groups, proving some elementary properties of Alexandroff paratopological groups, and providing examples to familiarize the reader with these spaces. In Section~\ref{sec_topological}, we study further topological properties and address some of the open questions mentioned above. Finally, in Section~\ref{sec_combinatorics}, we provide a topological classification of a special family of Alexandroff paratopological spaces.

\section{Preliminaries}\label{sec_intro}

Let us recall the main definitions and concepts from the theory of Alexandroff spaces and paratopological groups. For a comprehensive introduction to paratopological groups, we refer the reader to \cite{tkachenko2014paratopological}. For an introduction to the theory of Alexandroff spaces, we recommend the standard references \cite{barmak2011algebraic, may1966finite}.
\begin{df}\label{def_topological group}
    A \emph{topological group} \(X\) is a topological space equipped with a group structure such that
\begin{enumerate}
    \item[(i)] the multiplication map \(m : X \times X \to X\), defined by \(m(x,y) = xy\), is continuous; and
    \item[(ii)] the inversion map \(\mathrm{Inv} : X \to X\), defined by \(\mathrm{Inv}(x) = x^{-1}\), is continuous.
\end{enumerate}

If condition (ii) is omitted, then \(X\) is called a \emph{paratopological group}.
\end{df}

\textbf{Notation}. Throughout, we assume that for any paratopological group \(X\), 
the symbol \(1 \in X\) denotes the identity element of the group. For each \(x \in X\), let 
\(L_x : X \to X\) (resp., \(R_x : X \to X\)) denote the map 
\(L_x(y) = m(x, y)\) (resp., \(R_x(y) = m(y, x)\)). 
Morphisms in this category are maps that are continuous and also group homomorphisms. 
We simply say that a map \(f : X \to Y\) between two paratopological groups is a 
continuous homomorphism if it satisfies these conditions. 
If there exists a continuous homomorphism \(f^{-1} : Y \to X\) such that 
\(f \circ f^{-1}\) and \(f^{-1} \circ f\) are the identity maps, 
then we say that \(X\) is isomorphic to \(Y\).

\begin{df}\label{df_alexandroff}
    A topological space $X$ is an \emph{Alexandroff space} if the arbitrary intersection of open sets is an open set.
\end{df}
One of the most important examples of Alexandroff spaces is that of finite topological spaces, since any topology on a finite set automatically satisfies the property of Definition \ref{df_alexandroff}.

Given an Alexandroff \(T_0\)-space \(X\) and a point \(x \in X\), define  \(U_x\) (resp., \(F_x\)) as the intersection of all open (resp., closed) sets containing \(x\). Set \(x \leq y\) if and only if \(U_x \subseteq U_y\). This relation defines a partial order on \(X\). Conversely, for any partially ordered set \((X, \leq)\), the collection of lower sets forms a basis for an Alexandroff \(T_0\) topology on \(X\). 
Moreover, a map \(f : X \to Y\) between Alexandroff spaces is continuous if and only if \(f : (X,\leq) \to (Y,\leq)\) is order-preserving. From this, one obtains one of the fundamental theorems in the theory (see \cite{alexandroff1937diskrete}):
\begin{thm}
     The category of Alexandroff \(T_0\)-spaces is isomorphic to the category of partially ordered sets.
\end{thm}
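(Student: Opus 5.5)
We construct mutually inverse functors between Alexandroff $T_0$-spaces with continuous maps and posets with order-preserving maps, and check that they compose to the identity on the nose. This gives an isomorphism of categories, not merely an equivalence.

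\textbf{From spaces to posets.} Given an Alexandroff $T_0$-space $X$, set $\Phi(X)=(X,\leq)$, where $x\leq y$ iff $U_x\subseteq U_y$. Because $X$ is Alexandroff, each $U_x$ is open and is the minimal open neighbourhood of $x$.
\begin{itemize}
\item Reflexivity and transitivity are immediate.
\item For antisymmetry, suppose $U_x=U_y$. Then every open set containing $x$ contains $U_x\ni y$, and conversely. The $T_0$ axiom then forces $x=y$.
\item Note that $x\leq y$ iff $x\in U_y$. Indeed, $x\in U_y$ makes $U_y$ an open set containing $x$, so $U_x\subseteq U_y$.
\end{itemize}
On morphisms, $\Phi(f)=f$. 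A continuous $f$ is order-preserving: if $x\in U_y$, then $f^{-1}(U_{f(y)})$ is an open set containing $y$. Hence it contains $U_y\ni x$, giving $f(x)\in U_{f(y)}$.

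\textbf{From posets to spaces.} Given a poset $P$, let $\Psi(P)$ be $P$ with the topology whose open sets are the lower sets.
\begin{itemize}
\item Arbitrary unions and intersections of lower sets are lower sets, so this is a topology and it is Alexandroff.
\item The minimal open set of $x$ is ${\downarrow}x=\{z: z\leq x\}$.
\item $T_0$ follows from antisymmetry: if $x\neq y$, then one of ${\downarrow}x$, ${\downarrow}y$ separates them.
\end{itemize}
On morphisms, $\Psi(f)=f$. An order-preserving map pulls lower sets back to lower sets, so it is continuous.

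\textbf{Inverse on objects.} Both functors are the identity on underlying sets and on maps, so functoriality is automatic. It remains to check that $\Psi\Phi(X)=X$ and $\Phi\Psi(P)=P$ as structures.
\begin{itemize}
\item For $\Phi\Psi(P)$: the minimal open set is ${\downarrow}x$, and ${\downarrow}x\subseteq{\downarrow}y$ iff $x\leq y$. So the order is recovered.
\item For $\Psi\Phi(X)$: every open set $V$ is a lower set, since $y\in V$ implies $U_y\subseteq V$. Conversely, a lower set $L$ equals $\bigcup_{y\in L}U_y$, because $U_y={\downarrow}y$. This is a union of open sets, hence open. So the topologies coincide.
\end{itemize}

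The only step needing genuine care is this last one: showing that the topology of an Alexandroff space is exactly the set of lower sets. That is where the Alexandroff hypothesis is essential, since it is needed for each $U_x$ to be open. The remaining verifications are routine.
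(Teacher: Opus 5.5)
Your proof is correct and takes the same route the paper sketches just before the statement: the specialization order $x\le y$ iff $U_x\subseteq U_y$, lower sets as the open sets, and continuity equivalent to monotonicity. The paper defers the details to Alexandroff's 1937 paper, and you supply them, including the key point that the Alexandroff hypothesis makes each $U_x$ open, so the open sets are exactly the lower sets.
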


From now on, we do not distinguish between an Alexandroff \(T_0\)-space and a partially ordered set (or poset). Moreover, we will assume, without further mention, that every Alexandroff space under consideration satisfies the \(T_0\) separation property. This convention is justified because any Alexandroff space satisfying the \(T_1\) separation axiom must have the discrete topology.

\begin{rem}
Let \(X\) be an Alexandroff space. Then
\[
U_x = \{\, y \in X \mid y \leq x \,\}, \qquad
F_x = \{\, y \in X \mid y \geq x \,\}.
\]
Moreover, for any subset \(S \subseteq X\), define
\[
U_S = \{\, y \in X \mid y \leq s \text{ for some } s \in S \,\}, \qquad
F_S = \{\, y \in X \mid y \geq s \text{ for some } s \in S \,\}.
\]
Similarly, we define
\[
C_x = U_x \cup F_x, \qquad C_S = U_S \cup F_S.
\]
\end{rem}

When defining the partial order on an Alexandroff space \(X\), one may also consider the following relation: \(x \leq y\) if and only if \(U_x \subseteq U_y\). This relation defines another partial order on \(X\), which we call the \emph{opposite order}. If one instead takes upper sets in a 
partially ordered set \((X,\leq)\), then one obtains an Alexandroff \(T_0\) topology on \(X\), called the \emph{opposite topology}. We denote by \(X^{op}\) 
the set \(X\) equipped with this opposite topology or, equivalently, with the opposite partial order. It is straightforward to see that \(U_x\) corresponds to 
\(F_x\) in \(X^{op}\), and vice versa.

Given an Alexandroff space \(X\) and a point \(x \in X\), let \(A_x\) denote a maximal antichain containing \(x\). The \emph{width} of \(X\), denoted \(\mathrm{width}(X)\), is defined as the cardinality of a maximal antichain in \(X\). The \emph{height} of \(X\), denoted \(\mathrm{ht}(X)\), is defined as one less 
than the length of a maximal chain in \(X\). The height of a point \(x\) is defined by \(\mathrm{ht}(x) = \mathrm{ht}(U_x)\). Furthermore, we write \(x \prec y\) in \(X\) to indicate that there is no element \(z \in X\) such 
that \(x < z < y\). Note that if \(f : X \to Y\) is a homeomorphism, then \(f\) preserves heights, the width, chains, and antichains. The \emph{Hasse 
diagram} of \(X\) is the directed graph whose vertices are the elements of \(X\), with a directed edge from \(x\) to \(y\) whenever \(x \prec y\).

Let us recall a basic property of Alexandroff spaces:

\begin{prop}
    Let $X$ be an Alexandroff space. Then $X$ is connected if and only if it is path-connected. Moreover, a path from $x$ to $y$ in $X$ is a finite sequence of points $x_0,\ldots,x_n \in X$ such that $x_0 = x$, $x_n = y$, and for each $i$ we have either $x_i < x_{i+1}$ or $x_i > x_{i+1}$.
\end{prop}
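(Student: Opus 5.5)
The plan is to reduce everything to the order-theoretic description of Alexandroff \(T_0\)-spaces. For a subset \(S\subseteq X\) and \(x\in X\) we have \(x\in\overline{S}\) if and only if \(U_x\cap S\neq\emptyset\), that is, if and only if some \(s\in S\) satisfies \(s\le x\). Hence the closure of a set is its upward closure, \(\overline{S}=F_S\). It follows that \(X\) is disconnected exactly when \(X=A\sqcup B\) with \(A,B\) nonempty and both open and closed. In the Alexandroff setting this means both \(A\) and \(B\) are down-sets and up-sets, since open sets are unions of the minimal neighbourhoods \(U_x\), i.e. down-sets, and closed sets are up-sets.

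For the first assertion, introduce on \(X\) the equivalence relation \(x\sim y\) if there is a finite fence \(x=x_0,\ldots,x_n=y\) with consecutive points comparable; after merging repeated points, each step is a strict inequality. Each equivalence class \(K\) is both a down-set and an up-set: if \(z\le x\in K\) or \(z\ge x\in K\), then appending \(z\) extends a fence, so \(z\in K\). Therefore each class is clopen.

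Conversely, a clopen set \(A\) is a union of classes. If \(x\in A\) and \(x\) is comparable to \(y\), then \(y\in A\), since \(A\) is both a down-set and an up-set; induction along fences then gives the claim. Consequently \(X\) is connected if and only if there is exactly one class, i.e. any two points are joined by such a fence.

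It remains to see that fences are genuinely paths, and that paths give connectedness; the latter is standard, since path-connected spaces are connected. The key lemma is that if \(a<b\) (or \(a\le b\)), then \(\gamma\colon[0,1]\to X\) with \(\gamma(t)=a\) for \(t<1\) and \(\gamma(1)=b\) is continuous. To check it, take an open \(V\), a down-set. If \(b\in V\), then \(a\in V\), so \(\gamma^{-1}(V)=[0,1]\). If \(b\notin V\) but \(a\in V\), then \(\gamma^{-1}(V)=[0,1)\). Otherwise \(\gamma^{-1}(V)=\emptyset\). All three preimages are open. Reversing the parameter handles \(a>b\), and concatenating finitely many such paths yields a path along any fence.

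This proves connected \(\Rightarrow\) path-connected, and the converse is general topology. The "moreover" clause, that paths correspond to such finite sequences, is read off from the same construction. Conversely, the image of any path is connected, so its endpoints lie in the same fence-class, which shows that paths and fences give the same notion of joining two points. The only step needing care is the continuity check of \(\gamma\), which hinges on open sets being down-sets. Everything else is routine.
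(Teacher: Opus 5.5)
Your proof is correct and complete. The paper gives no proof here; it only recalls the fact from the standard references (Barmak, May), and your route is the usual one there: fence-classes are both down-sets and up-sets, hence clopen, and a fence becomes a path by concatenating maps that sit at $a$ on $[0,1)$ and jump to $b\geq a$ at $t=1$, which are continuous because open sets are down-sets.
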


Now, we introduce a useful construction within this setting:
\begin{df}
Let $X$ and $Y$ be Alexandroff spaces. The join $X\circledast Y$ is the Alexandroff space that consists on the disjoint union of $X$ and $Y$, where we keep the given ordering in $X$ and $Y$ and we declare that $x\leq y$ for every $x\in X$ and $y\in Y$.
\end{df}

To conclude this section, we recall some basic results concerning homotopy in this setting. We begin by introducing notation. Given continuous maps \(f, g : X \to Y\) between Alexandroff spaces, we write \(f \geq g\) (resp., \(f \leq g\)) if \(f(x) \geq g(x)\) (resp., \(f(x) \leq g(x)\)) for every \(x \in X\).
\begin{thm}
    Let $f,g:X\rightarrow Y$ be continuous maps between Alexandroff spaces. If there exists a sequence $f_0,...,f_n:X\rightarrow Y$ of continuous maps such that $f_0=f,~ g=g_n$ and $f_{i+1}\leq f_i$ or $f_{i+1}\geq f_{i}$ with $i=0,...,n-1$, then $f$ is homotopic to $g$.
\end{thm}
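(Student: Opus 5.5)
This is the standard homotopy criterion for Alexandroff spaces. Since homotopy is an equivalence relation, it suffices to treat a single comparable pair: if $f_i\le f_{i+1}$ or $f_i\ge f_{i+1}$, then $f_i\simeq f_{i+1}$. Chaining these finitely many homotopies gives $f=f_0\simeq f_n=g$. By symmetry we may assume $f\le g$ and must produce a homotopy $H:X\times[0,1]\to Y$.

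\textbf{Step 1: define the homotopy.} Set $H(x,t)=f(x)$ for $t<1$ and $H(x,1)=g(x)$. Clearly $H(\cdot,0)=f$ and $H(\cdot,1)=g$.

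\textbf{Step 2: reduce continuity to minimal open sets.} $Y$ is Alexandroff, so the sets $U_y$ form a basis, and it suffices to show that each $H^{-1}(U_y)$ is open. We have
\[
H^{-1}(U_y)=\bigl(f^{-1}(U_y)\times[0,1)\bigr)\cup\bigl(g^{-1}(U_y)\times\{1\}\bigr).
\]

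\textbf{Step 3: use $f\le g$.} If $g(x)\le y$, then $f(x)\le g(x)\le y$, so $g^{-1}(U_y)\subseteq f^{-1}(U_y)$. Hence
\[
H^{-1}(U_y)=\bigl(f^{-1}(U_y)\times[0,1)\bigr)\cup\bigl(g^{-1}(U_y)\times[0,1]\bigr),
\]
a union of two products of open sets, which is open. This inequality is the key point.

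\textbf{Step 4: the reverse order.} If instead $f\ge g$, apply the same construction with the roles of $f$ and $g$ swapped, so the constant part of $H$ is $g$. Then reparametrize by $t\mapsto 1-t$ to obtain a homotopy from $f$ to $g$.

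\textbf{Step 5: chain.} Concatenating the homotopies between consecutive $f_i$ completes the proof.

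The only delicate point is choosing the correct endpoint for the jump. The map that takes the larger value only at the instant $t=1$ and the smaller value elsewhere is continuous. The map that takes the smaller value on a closed set and the larger value on an open set is generally not.
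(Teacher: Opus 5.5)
Your proof is correct and complete. With the paper's convention $U_y=\{z\mid z\le y\}$, the inequality $f\le g$ gives $g^{-1}(U_y)\subseteq f^{-1}(U_y)$. Hence $H^{-1}(U_y)=\bigl(f^{-1}(U_y)\times[0,1)\bigr)\cup\bigl(g^{-1}(U_y)\times[0,1]\bigr)$ is open. Steps 4 and 5 correctly handle the reversed inequality and the chaining, and you rightly read the typo $g=g_n$ as $g=f_n$.

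There is no proof in the paper to compare with: the statement is only recalled in the preliminaries as a standard fact from the finite-space literature. A common route there, for finite $X$, identifies $Y^X$ (with the compact-open topology) with the poset of maps under the pointwise order. A fence $f_0\le f_1\ge f_2\le\cdots$ is then a path in $Y^X$, and the exponential law turns it into a homotopy. That route also gives the converse (homotopic maps between finite spaces are joined by a fence). However, the identification of the compact-open topology with the order topology can fail for infinite $X$.

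Your direct homotopy proves only the stated implication, but it is more general. It uses no finiteness and no $T_0$ axiom, and nothing about $X$ beyond the continuity of $f$ and $g$; the only fact about $Y$ it needs is that every open set containing $g(x)$ also contains $f(x)$. This is the generality the paper needs, since its spaces, such as $\mathbb{Z}$ and $\mathbb{Z}\oplus\mathbb{Z}$, are infinite.
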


\begin{df}
    Let $X$ be an Alexandroff space and $x\in X$. It is said that $x$ is a \emph{down} (resp., \emph{up}) \emph{beat point} if $U_x\setminus \{x\}$ (resp., $F_x\setminus \{ x\}$) contains a maximum (resp., minimum). 
\end{df}
\begin{thm}
    Let $X$ be an Alexandroff space. If $x\in X$ is a down or up beat point, then $X\setminus \{x \}$ is a strong deformation retract of $X$.
\end{thm}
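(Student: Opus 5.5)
This is the standard beat-point reduction. Let $X$ be an Alexandroff space and $x$ a down beat point, so $U_x\setminus\{x\}$ has a maximum $y$ (the up case follows by passing to $X^{op}$). The plan is to construct an explicit retraction $r:X\to X\setminus\{x\}$ that is comparable to the identity, and then invoke the previous theorem on comparable maps to obtain the homotopy relative to $X\setminus\{x\}$.

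\textbf{The retraction.} Define $r(x)=y$ and $r(z)=z$ for $z\neq x$, and let $i:X\setminus\{x\}\to X$ be the inclusion.

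\textbf{Continuity of $r$.} Since continuity is equivalent to being order-preserving, take $a\le b$ in $X$. If neither point is $x$, there is nothing to check. If $a=x$ and $b=x$, it is trivial. If $a=x<b$, then $r(a)=y<x\le b=r(b)$. If $a<b=x$, then $a\in U_x\setminus\{x\}$, so $a\le y=r(b)$ because $y$ is the maximum of $U_x\setminus\{x\}$. Hence $r$ is order-preserving. Clearly $r\circ i=\mathrm{id}_{X\setminus\{x\}}$.

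\textbf{The homotopy.} The composite $i\circ r:X\to X$ satisfies $i\circ r\le \mathrm{id}_X$ pointwise, since $y<x$ and all other points are fixed. By the previous theorem, with the sequence $f_0=\mathrm{id}_X$, $f_1=i\circ r$, the identity is homotopic to $i\circ r$. Hence $X\setminus\{x\}$ is a deformation retract of $X$.

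\textbf{Strongness.} The homotopy must fix $X\setminus\{x\}$ pointwise. The theorem as stated only yields a homotopy, so I would recall its standard construction for comparable maps $f\le g$: take $H(z,t)=f(z)$ for $t<1$ and $H(z,1)=g(z)$. Continuity of $H$ follows from openness of the $U$'s: for an open set $V$, $H^{-1}(V)=(f^{-1}(V)\times[0,1))\cup(g^{-1}(V)\times\{1\})$, and since $g\ge f$ gives $g^{-1}(V)\subseteq f^{-1}(V)$, this equals $(f^{-1}(V)\times[0,1))\cup(g^{-1}(V)\times[0,1])$, which is open. (Here $f=i\circ r$ and $g=\mathrm{id}_X$; the direction of the inequality matters.) This $H$ is constant in $t$ at every point where $f$ and $g$ agree, in particular on $X\setminus\{x\}$, so the deformation is strong.

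The only step requiring care is checking this explicit homotopy, specifically that the inequality $g^{-1}(V)\subseteq f^{-1}(V)$ runs the right way. The continuity of $r$ is immediate from the maximum property of $y$.
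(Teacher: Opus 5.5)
Your proof is correct. The paper states this result as a recalled preliminary with no proof of its own, and your argument is the standard beat-point argument from the literature. The three steps are: the retraction $r$ sending $x$ to the maximum $y$ of $U_x\setminus\{x\}$ is order-preserving; $i\circ r\le \mathrm{id}_X$; and your explicit homotopy (constant at the lower map, jumping to the upper map at $t=1$) is continuous and fixes $X\setminus\{x\}$ pointwise.

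The one place to phrase more carefully is the up case. A homotopy on $X^{op}$ is not a homotopy on $X$, since the two topologies differ. So read ``passing to $X^{op}$'' as dualizing the order-theoretic steps rather than transporting the homotopy. After dualizing you get $\mathrm{id}_X\le i\circ r$, and your same $H$ with $f=\mathrm{id}_X$ and $g=i\circ r$ works verbatim.
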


\section{On the existence of Alexandroff topological groups and elemental properties}\label{sec_existencia_primeras_propiedades}

We begin this section by proving a sort of results that will play a key role in the developments of this section and subsequent sections.

\begin{lem}\label{lem_aux_x_en_u_1_entonces_inverso_f_1} Let $X$ be an Alexandroff paratopological group. If $x\in F_1$, then $x^{-1}\in U_1$.
\end{lem}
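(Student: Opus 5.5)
Translations are homeomorphisms, and I will use this to transport the order relation $1\le x$ to a relation involving $x^{-1}$.

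\emph{Setup.} Since $X$ is a paratopological group, multiplication $m$ is continuous. Hence for each $g\in X$ the left translation $L_g$ and the right translation $R_g$ are continuous maps $X\to X$. Their inverses are $L_{g^{-1}}$ and $R_{g^{-1}}$, which are also continuous, so every translation is a homeomorphism. In Alexandroff $T_0$-spaces continuous maps are exactly the order-preserving maps. It follows that every translation is an order isomorphism: $a\le b$ if and only if $ga\le gb$, and likewise $a\le b$ if and only if $ag\le bg$.

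\emph{Main step.} Let $x\in F_1$, so $1\le x$. Apply the order-preserving map $L_{x^{-1}}$ to this inequality. This gives $x^{-1}\cdot 1\le x^{-1}x$, that is, $x^{-1}\le 1$. By the description of $U_1$ in the Remark of Section \ref{sec_intro}, this says exactly that $x^{-1}\in U_1$. Using $R_{x^{-1}}$ instead of $L_{x^{-1}}$ gives the same conclusion.

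\emph{Where care is needed.} The only point requiring attention is justifying that $L_g$ is continuous from the continuity of $m$. This holds because $L_g=m\circ i_g$, where $i_g\colon X\to X\times X$, $y\mapsto (g,y)$, is continuous into the product topology. Note that no continuity of inversion is needed: the inverse element enters only through translation, never through the map $\mathrm{Inv}$. The symmetric statement, that $x\in U_1$ implies $x^{-1}\in F_1$, follows in the same way.
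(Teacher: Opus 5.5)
Your proof is correct and is essentially the paper's argument: the paper applies the continuous (hence order-preserving) right translation $R_{x^{-1}}$ to $1\le x$ to get $x^{-1}\le 1$. Your use of $L_{x^{-1}}$, which you note can be swapped for $R_{x^{-1}}$, is the same one-line idea; only monotonicity of a single translation is needed, not the full homeomorphism property.
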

\begin{proof}
    The condition $x\in F_1$ is equivalent to $x\geq 1$. Since $X$ is a paratopological group, the map $R_{x^{-1}}$ is continuous. Therefore, $R_{x^{-1}}(x)=xx^{-1}=1\geq  x^{-1}=1x^{-1}=R_{x^{-1}}(1)$, which gives $x^{-1}\in U_1$.
\end{proof}
From Lemma \ref{lem_aux_x_en_u_1_entonces_inverso_f_1}, it is easy to deduce the following result.
\begin{lem}\label{lem_opuesto_u_1} Let $X$ be an Alexandroff paratopological group. Then $U_1^{op}=F_1$. 
\end{lem}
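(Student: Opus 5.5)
The plan is to show that inversion $\mathrm{Inv}(x)=x^{-1}$ restricts to a bijection from $U_1$ onto $F_1$ that reverses the order. This gives the statement whether $U_1^{op}=F_1$ is read as an equality of sets, $\{x^{-1}\mid x\in U_1\}=F_1$, or as an isomorphism of posets between $U_1$ with the opposite order and $F_1$. I would prove the slightly stronger fact that $\mathrm{Inv}:X\to X^{op}$ is order-preserving, i.e. $x\le y$ implies $y^{-1}\le x^{-1}$, and then specialize to $U_1$.

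\textbf{Step 1: inversion reverses the order.} Let $x\le y$. Continuity of $R_{x^{-1}}$ gives $1=xx^{-1}\le yx^{-1}$. Continuity of $L_{y^{-1}}$ then gives $y^{-1}\le y^{-1}yx^{-1}=x^{-1}$. This uses only the continuity of translations, which holds in any paratopological group, exactly as in Lemma \ref{lem_aux_x_en_u_1_entonces_inverso_f_1}.

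\textbf{Step 2: the two inclusions.} If $x\in U_1$, then $x\le 1$, so Step 1 gives $1=1^{-1}\le x^{-1}$, that is, $x^{-1}\in F_1$. Conversely, if $z\in F_1$, then Lemma \ref{lem_aux_x_en_u_1_entonces_inverso_f_1} gives $z^{-1}\in U_1$, and $z=(z^{-1})^{-1}$ is the image of a point of $U_1$. Hence $\mathrm{Inv}(U_1)=F_1$. Since $\mathrm{Inv}$ is an involution, this restriction is a bijection $U_1\to F_1$ with inverse again given by inversion.

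\textbf{Step 3: order isomorphism.} By Step 1, both $\mathrm{Inv}|_{U_1}$ and its inverse $\mathrm{Inv}|_{F_1}$ reverse the order. Therefore $\mathrm{Inv}|_{U_1}$ is an isomorphism from $U_1^{op}$ onto $F_1$, equivalently a homeomorphism between the corresponding Alexandroff spaces.

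There is no real obstacle here. The only point needing care is Step 1: it is tempting to argue with a single translation, as in Lemma \ref{lem_aux_x_en_u_1_entonces_inverso_f_1}, but handling general pairs $x\le y$ requires composing a right and a left translation. The general pairs are exactly what is needed for the order-isomorphism reading of the statement.
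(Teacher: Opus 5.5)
Your proof is correct and follows the paper's route: the paper gives no written proof beyond saying the lemma is easy to deduce from Lemma \ref{lem_aux_x_en_u_1_entonces_inverso_f_1}, that is, by using continuity of translations to see that inversion interchanges $U_1$ and $F_1$. Your Step 1 composes $R_{x^{-1}}$ and $L_{y^{-1}}$ to get $x\le y\Rightarrow y^{-1}\le x^{-1}$, which makes explicit the order-reversal the paper leaves implicit; that order-reversal is what the order-isomorphism reading of the statement requires, and the paper relies on that reading later.
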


Indeed, we have a generalization for open sets:
\begin{lem}\label{lem_inversas_cerrados_en_abiertos}
    Let $U$ be an open set of a non-compact Alexandroff paratopological group $X$. Then $U^{-1}$ is a closed set in $X$.
\end{lem}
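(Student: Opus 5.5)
The plan is to use the poset description of the topology. A set is open exactly when it is a down-set, and closed exactly when it is an up-set. So it suffices to show that $U^{-1}$ is upward closed: if $y\in U^{-1}$ and $z\geq y$, then $z\in U^{-1}$, that is, $z^{-1}\in U$.

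The only tools needed are that left and right translations are order-preserving, and Lemma~\ref{lem_aux_x_en_u_1_entonces_inverso_f_1}. Translations are order-preserving because multiplication $m$ is continuous, so each $L_x$ and $R_x$ is continuous. Indeed, $L_x$ and $R_x$ are homeomorphisms, since their inverses $L_{x^{-1}}$ and $R_{x^{-1}}$ are also continuous. Hence $a\leq b$ if and only if $xa\leq xb$, and likewise on the right.

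The steps, in order:
\begin{enumerate}
\item Write $y=u^{-1}$ with $u\in U$, and suppose $z\geq u^{-1}$.
\item Apply $L_u$ to get $uz\geq uu^{-1}=1$, so $uz\in F_1$.
\item Lemma~\ref{lem_aux_x_en_u_1_entonces_inverso_f_1} gives $(uz)^{-1}=z^{-1}u^{-1}\in U_1$, i.e.\ $z^{-1}u^{-1}\leq 1$.
\item Apply $R_u$ to get $z^{-1}\leq u$.
\item Since $U$ is open, hence a down-set, and $u\in U$, we conclude $z^{-1}\in U$. Therefore $z\in U^{-1}$.
\end{enumerate}

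There is no real obstacle here. The only care needed is in the bookkeeping of the non-commutative products: the inverse of $uz$ is $z^{-1}u^{-1}$, so the final translation must be on the right by $u$, not on the left. I also note that this argument does not appear to use non-compactness, only continuity of the translations. I would write the proof without invoking that hypothesis and remark that the conclusion holds for every Alexandroff paratopological group.
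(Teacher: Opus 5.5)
Your proof is correct and takes the same route as the paper: you show $U^{-1}$ is an up-set by proving that $z\geq u^{-1}$ with $u\in U$ forces $z^{-1}\leq u$, and then use that $U$ is a down-set. The paper simply asserts this inversion-reversal step, whereas your translations by $L_u$ and $R_u$ together with Lemma~\ref{lem_aux_x_en_u_1_entonces_inverso_f_1} actually justify it, and you are right that non-compactness is never used.
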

\begin{proof}
    Let $x\in U$. We verify that for every $y>x^{-1}$, one has that $y\in U^{-1}$, i.e., $U^{-1}$ is an upper set. Recall that an upper set is a closed set. Hence, by proving the last assertion we conclude. If $y>x^{-1}$, then $x>y^{-1}$. Since $U$ is an open set, we obtain that $y^{-1}\in U$ and, consequently, $y=(y^{-1})^{-1}\in U^{-1}$.
\end{proof}

\begin{thm}\label{thm_no_subgrupos_cerrados_abiertos}
    Let $X$ be a connected non-compact Alexandroff paratopological group, then $X$ does not have proper open (closed) subgroups.
\end{thm}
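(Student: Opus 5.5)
The plan is the standard argument that an open subgroup is also closed, adapted to the poset language, followed by connectedness.

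Let $H\subseteq X$ be an open subgroup. Each left translation $L_x$ is a homeomorphism: it is continuous because $X$ is a paratopological group, and its inverse $L_{x^{-1}}$ is continuous for the same reason. Hence every coset $xH=L_x(H)$ is open. The complement $X\setminus H=\bigcup_{x\notin H}xH$ is then a union of open sets, so $H$ is also closed. In order terms, $H$ is a down-set, and $H$ is an up-set because its complement is a down-set.

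Since $X$ is connected and $H\neq\emptyset$ (it contains $1$), we conclude $H=X$. To stay inside the paper's toolkit, one can instead use the path description of connectedness. If $h\in H$ and $y$ is comparable with $h$, then $y\in H$: when $y<h$ this holds because $H$ is open, and when $y>h$ because $H$ is closed. Induction along a path $1=x_0,\dots,x_n=y$ then gives $y\in H$ for every $y$.

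For a closed subgroup $H$, the dual argument applies. Translations are homeomorphisms, so each coset $xH$ is closed, and $X\setminus H$ is a union of closed cosets. In an Alexandroff space arbitrary unions of closed sets (up-sets) are closed, so $X\setminus H$ is closed and $H$ is open. By the previous paragraph, $H=X$.

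Alternatively, one could use Lemma~\ref{lem_inversas_cerrados_en_abiertos}: since $H=H^{-1}$, an open subgroup is automatically closed. This explains why the non-compact hypothesis appears in the statement, but the translation argument above does not need it.

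The main obstacle is just making sure the ``union of closed sets is closed'' step is justified. It is, because up-sets are closed under arbitrary unions, which is exactly the Alexandroff property. Beyond that the argument is routine.
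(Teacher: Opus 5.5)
Your proof is correct, but it takes a different route from the paper.

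The paper's proof is a one-liner through Lemma~\ref{lem_inversas_cerrados_en_abiertos}. Inversion sends open sets (down-sets) to closed sets (up-sets), because $y>x^{-1}$ forces $x>y^{-1}$. Since a subgroup satisfies $H=H^{-1}$, an open subgroup is automatically closed, and, by passing to complements, a closed subgroup is open. Connectedness then finishes the argument.

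You instead use only the fact that translations are homeomorphisms. The coset decomposition $X\setminus H=\bigcup_{x\notin H}xH$ shows that an open subgroup is closed in any paratopological group. The same decomposition, together with the Alexandroff property (arbitrary unions of up-sets are up-sets), shows that a closed subgroup is open.

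Your route buys generality and makes the hypotheses transparent:
\begin{itemize}
\item The open case is the standard fact valid in all paratopological (even semitopological) groups.
\item The closed case isolates exactly where the Alexandroff property is needed.
\item Non-compactness is visibly unnecessary.
\item You treat the closed case explicitly, whereas the paper leaves it implicit.
\end{itemize}

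The paper's route buys brevity. It also ties the result to the order-reversing behaviour of inversion, which the paper reuses elsewhere, for example in $U_1^{op}=F_1$ and in identifying the conjugate topology with the opposite topology.

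One small correction to your remark that the lemma ``explains'' the non-compactness hypothesis. The proof of Lemma~\ref{lem_inversas_cerrados_en_abiertos} never uses non-compactness either, so neither argument needs it. In any case, by Theorem~\ref{thm_no_existen_finitos_paratopologicos_grupos} a connected compact Alexandroff paratopological group is a single point, where the statement is trivial.
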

\begin{proof}
   This follows immediately from Lemma~\ref{lem_inversas_cerrados_en_abiertos}.  If \(U\) is a proper open subgroup of \(X\), then \(U^{-1} = U\), which implies that \(U\) is also closed. The connectedness hypothesis then yields the result.
\end{proof}

Moreover, from Lemma \ref{lem_inversas_cerrados_en_abiertos}, we obtain the following result:

\begin{thm}
    Let $X$ be a non-compact Alexandroff paratopological group. Then the conjugate topology coincides with the opposite topology.
\end{thm}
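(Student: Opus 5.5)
The plan is to identify both topologies via their open sets. The conjugate topology on a paratopological group $X$ is the one whose open sets are exactly the sets $U^{-1}$ with $U$ open in $X$; equivalently, it is the topology transported along the inversion map $\mathrm{Inv}$. The opposite topology $X^{op}$ is the one whose open sets are the upper sets of $(X,\leq)$, i.e.\ the closed sets of $X$. So the statement amounts to showing that $\{U^{-1} : U \text{ open in } X\}$ coincides with the family of upper sets of $X$. I will prove the two inclusions separately.

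For the first inclusion, let $U$ be open in $X$. By Lemma~\ref{lem_inversas_cerrados_en_abiertos}, $U^{-1}$ is closed in $X$. Hence $U^{-1}$ is an upper set, and so it is open in $X^{op}$. This shows that the conjugate topology is contained in the opposite topology.

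For the reverse inclusion, let $C$ be an upper set, that is, closed in $X$. I want to show that $C^{-1}$ is a lower set, hence open in $X$, so that $C=(C^{-1})^{-1}$ belongs to the conjugate topology. This is the dual of Lemma~\ref{lem_inversas_cerrados_en_abiertos}.

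The argument relies on the fact, extracted from the proof of that lemma, that inversion is order-reversing: $y>x^{-1}$ if and only if $x>y^{-1}$. To justify this I first show that $a\leq b$ implies $b^{-1}\leq a^{-1}$. Since $L_{a^{-1}}$ and $R_{b^{-1}}$ are continuous, hence order preserving, from $a\le b$ we get $1\le a^{-1}b$, and then $b^{-1}\le a^{-1}$. Applying this implication to the inverses gives the converse, so inversion is an order anti-automorphism.

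With this in hand, take $x\in C$ and $y<x^{-1}$. Then $y^{-1}>x$, and since $C$ is an upper set, $y^{-1}\in C$. Therefore $y\in C^{-1}$, so $C^{-1}$ is a lower set, as required.

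The step that needs care is only this order-reversal of inversion; it follows directly from the continuity of translations and holds in any Alexandroff paratopological group. The non-compactness hypothesis enters only through the statement of Lemma~\ref{lem_inversas_cerrados_en_abiertos} and is not otherwise needed. Once both inclusions are established, the two topologies coincide, since each is the image of the other's lattice of open sets under inversion.
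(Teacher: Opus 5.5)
Your proof is correct and follows the paper's route. The paper derives the statement from Lemma~\ref{lem_inversas_cerrados_en_abiertos} alone, and you supply what it leaves implicit: the order-reversal of inversion (via $L_{a^{-1}}$ and $R_{b^{-1}}$) and the reverse inclusion that $C^{-1}$ is a lower set whenever $C$ is an upper set. Your closing phrase, that each topology is the image of the other's open sets under inversion, is inaccurate, since inversion sends the conjugate topology back to the original one rather than to the opposite one. The two inclusions already give equality, so this does not affect the argument.
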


For any given paratopological group \((G,\tau)\), it is well known that \(G^* = (G,\, \tau \vee \tau^{-1})\) is a topological group. This group is called the topological group \emph{associated} to the paratopological group 
\(G\) (see \cite[Section~1]{tkachenko2014paratopological} for more details and notation). In our setting, the topology of the topological group associated to a non-compact Alexandroff paratopological group is trivially the discrete 
topology. This is consistent with the fact that if \(G\) satisfies the \(T_0\) separation axiom, then \(G^*\) is Hausdorff. Moreover, in general, an Alexandroff topology and its opposite need not coincide.

We now state the main theorems of this section. The following results justify the hypotheses introduced earlier, namely non‑compactness and the assumption that we are working with paratopological groups.
\begin{thm}\label{thm_no_existen_grupos_topologicos_alexandroff} The only connected Alexandroff topological group is the trivial topological  one-point space.
\end{thm}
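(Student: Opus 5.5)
The plan is to show that continuity of inversion forces the order to collapse. After that, connectedness leaves only a single point.

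\textbf{Step 1: inversion reverses the order.} Let $X$ be a connected Alexandroff topological group. By the standing convention it is $T_0$, so we identify it with a poset. Suppose $x\le y$. The right translation $R_{x^{-1}}$ is continuous, hence order-preserving, so $1\le yx^{-1}$. Applying the continuous map $L_{y^{-1}}$ then gives $y^{-1}\le x^{-1}$. This is exactly the argument of Lemma \ref{lem_aux_x_en_u_1_entonces_inverso_f_1}, and it uses only the paratopological structure. So inversion is order-reversing. On the other hand, since $X$ is a topological group, $\mathrm{Inv}$ is continuous, and so it is also order-preserving. Hence $x\le y$ implies both $x^{-1}\le y^{-1}$ and $y^{-1}\le x^{-1}$, so $x^{-1}=y^{-1}$ by antisymmetry. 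Since inversion is injective, $x=y$.

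\textbf{Step 2: conclude discreteness and triviality.} Step 1 shows that any two comparable elements are equal, so the order is trivial. Then $U_x=\{x\}$ for every $x$, and the topology is discrete. By the path characterization of connectedness in Alexandroff spaces, a path consists of consecutive strictly comparable points, and there are none. So a connected discrete space is a single point, and since $X$ is a group this point must be $1$.

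\textbf{Main obstacle.} The argument is short, and the only delicate point is Step 1: we must get the order-reversing direction from the translations alone, without invoking non-compactness. Lemma \ref{lem_inversas_cerrados_en_abiertos} carries that hypothesis, so we avoid it and use only the translation argument above, which needs no such assumption. As a remark, without connectedness the same reasoning still shows that every Alexandroff topological group is discrete.
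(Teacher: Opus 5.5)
Your proof is correct and is essentially the paper's argument. The translation trick of Lemma \ref{lem_aux_x_en_u_1_entonces_inverso_f_1} makes inversion order-reversing, and this clashes with the continuity of $\mathrm{Inv}$. Because you apply it to an arbitrary comparable pair, you get discreteness directly, even without connectedness. You also avoid the paper's slightly loose ``without loss of generality $x>1$'' step, which needs connectedness to produce an element comparable to $1$ and a symmetric treatment of the case $x<1$.
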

\begin{proof}
    Let $X$ be a connected Alexandroff topological group. We argue by contradiction. Suppose  $x\in X$ such that $x\neq 1$. Without loss of generality, and by the connectedness property, we may assume that $x>1$. By Lemma \ref{lem_aux_x_en_u_1_entonces_inverso_f_1}, $x^{-1}<1$. This leads to a contradiction with (ii) in Definition \ref{def_topological group} because $x^{-1}\not > 1^{-1}=1$.
\end{proof}
\begin{thm}\label{thm_no_existen_finitos_paratopologicos_grupos}
Let $X$ be a compact Alexandroff paratopological space. 
\begin{enumerate}
    \item If $X$ is connected, then $X$ is the one-point space.
    \item If $X$ is not connected, then $X$ has the discrete topology.
\end{enumerate}

\end{thm}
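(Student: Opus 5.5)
The plan is to combine two facts: compactness of an Alexandroff space forces the existence of a maximal element, and a paratopological group is homogeneous under translations. Together these make every point maximal, so the partial order is trivial and the topology is discrete. Both items of the statement then follow at once.

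\textbf{Step 1 (homogeneity).} For each $x\in X$ the left translation $L_x$ is continuous by condition (i) of Definition~\ref{def_topological group}, and so is $L_{x^{-1}}$. Since $L_{x^{-1}}\circ L_x=L_x\circ L_{x^{-1}}=\mathrm{id}_X$, each $L_x$ is a homeomorphism. Continuous maps between Alexandroff spaces are exactly the order-preserving maps, so each $L_x$ is an order automorphism of $(X,\leq)$. Order automorphisms send maximal elements to maximal elements. Given any $y,z\in X$, the translation $L_{zy^{-1}}$ sends $y$ to $z$. Hence if a single point of $X$ is maximal, every point is maximal.

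\textbf{Step 2 (a maximal element exists).} The family $\{U_x\}_{x\in X}$ is an open cover of $X$. By compactness there are finitely many points $x_1,\dots,x_n$ with $X=U_{x_1}\cup\dots\cup U_{x_n}$, and we choose such a family with $n$ minimal; $n\geq 1$ since $1\in X$. We claim each $x_i$ is maximal. Suppose instead that $x_i<y$ for some $y$. Since $y\in U_{x_j}$ for some $j$, we get $x_i<y\leq x_j$. Here $j\neq i$, because $x_i<x_i$ is impossible. Then $U_{x_i}\subseteq U_{x_j}$, so $U_{x_i}$ can be discarded from the cover, contradicting the minimality of $n$.

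\textbf{Step 3 (conclusion).} By Steps 1 and 2, every element of $X$ is maximal, so no two distinct elements are comparable. Thus $U_x=\{x\}$ for all $x$, and the topology is discrete. This proves item 2. A compact discrete space is finite, and a connected discrete space has at most one point; since $1\in X$, this proves item 1.

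The main point requiring care is Step 2, because a compact Alexandroff space need not be finite (for instance, an infinite descending chain below a single top point). One must therefore extract maximality from compactness through the minimal-cover argument rather than from finiteness. Everything else is formal once translations are recognized as order automorphisms.
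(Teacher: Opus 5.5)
Your proof is correct, but it is organized differently from the paper's.

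The paper handles the two items separately. For item 1 it obtains a maximal point $x$ with $1<x$ and derives the contradiction $x=L_x(1)<L_x(x)$. Getting that $x$ tacitly needs three things:
\begin{enumerate}
\item connectedness, to find a point comparable to $1$;
\item an inversion argument as in Lemma~\ref{lem_aux_x_en_u_1_entonces_inverso_f_1}, in case that point lies below $1$;
\item the fact that every point lies below a maximal one.
\end{enumerate}
For item 2 the paper applies item 1 to the identity component to get $X^1=\{1\}$. It then translates a comparable pair $x<y$ by $L_{x^{-1}}$ to get $1\le x^{-1}y$, which forces $y=x$.

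You instead prove discreteness directly for every compact Alexandroff paratopological group. A single maximal element anywhere suffices, because left translations form a transitive group of order automorphisms, so maximality propagates to every point. Item 1 then becomes a corollary of item 2 rather than an input to it.

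This route buys several things. It avoids connectedness entirely, and it does not need the maximal point to lie above $1$. It also avoids an unstated fact the paper uses when it says ``repeating the arguments from 1'': that the identity component is itself a compact paratopological subgroup. Your minimal-subcover argument also makes rigorous the compactness step that the paper only sketches through an infinite ascending chain.

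The paper's version offers mainly brevity once a maximal $x>1$ is in hand. Its final contradiction is really your homogeneity observation applied to the single pair $1<x$.
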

\begin{proof}
    1.  By compactness, we may assume that there are a finite number of maximal points in $X$, otherwise by considering an infinite chain $x_1<x_2<...$ and a covering containing $U_{x_i}$ we derive a contradiction. By connectedness, assume $1<x$, where $x$ is a maximal point. Then $x=L_x(1)<L_x(x)$, which gives a contradiction with the maximality of $x$.


    2. We argue by contradiction. Assume that $X$ does not have the discrete topology. Note that by repeating the arguments from 1, we can conclude that the connected component that contains the point $1$, denoted by $X^1$, consists only on this point. On the other hand, since $X$ does not have the discrete topology, there exists a connected component, denoted by $X'$, such that $x,y\in X'$ with $x<y$. Note that $L_x$ is a homeomorphism whose inverse is $L_{x^{-1}}$. By the continuity of $L_{x^{-1}}$, we deduce that $1=x^{-1}x\leq x^{-1}y$. However, this relations holds if and only if $x^{-1}y=1$ since $X^1=\{1 \}$, and this gives a contradiction with the fact that $L_{x^{-1}}$ is a homeomorphism.
\end{proof}

Theorems \ref{thm_no_existen_grupos_topologicos_alexandroff} and \ref{thm_no_existen_finitos_paratopologicos_grupos} give that within this setting the only non-trivial spaces are the non-compact paratopological Alexandroff spaces. To conclude this section, we present three examples of such spaces. The first two examples are abelian, while the third is not.

\begin{ex}\label{ex_reales_orden_usual}
    Let us consider the set of the real numbers $\mathbb{R}$ or integer numbers $\mathbb{Z}$ with the usual order and the sum. This is a non-trivial non-compact Alexandroff paratopological group.
\end{ex}
\begin{ex}\label{ex_loewner}
Let $\mathcal{M}_n$ the set of square matrices of order $n$. Consider the Loewner order on this set, i.e., $A\leq B$ if and only if $B-A$ is positive semi-definite. This defines an Alexandroff $T_0$ topology on  $\mathcal{M}_n$. Moreover, the usual sum of matrices is compatible with this partial order. Hence, $\mathcal{M}_n$ is a non-compact abelian paratopological Alexandroff group. Note also that $U_{\mathbf{0}}$ consists on the set of every positive semi-definite matrices, where $\mathbf{0}$ denotes the matrix of zeros.

Let us consider the real numbers $\mathbb{R}$ as in Example \ref{ex_reales_orden_usual}. It is easy to verify that the map $\textnormal{tr}:\mathcal{M}_n\rightarrow\mathbb{R}$ defined by the usual trace of matrices is a continuous homomorphism between non-compact Alexandroff paratopological groups. 
\end{ex}

    

\begin{ex}\label{ex_matrices_invertibles}
    Let $GL_n(\mathbb{R})$ be the group of invertible matrices with real coefficients where the group operation is the standard product of matrices. Set $A\leq B$ if and only if $\log(|\det(A)|)<\log(|\det(B)|)$ or $A=B$. This relation is clearly a partial order. Let us verify that the map $m:GL_n(\mathbb{R})\times GL_n(\mathbb{R})\rightarrow GL_n(\mathbb{R})$ defined by $m(A,B)=AB$ is a continuous map. Suppose $(A,B)\leq (C,D)$, which implies that $\log(|\det(A)|)<\log(|\det(C)|)$ or $A=C$ and $\log(|\det(B)|)<\log(|\det(D)|)$ or $B=D$. On the other hand, $\log(|\det(AB)|)=\log(|\det(A)|)+\log(|\det(B)|)$ and $\log(|\det(CD)|)=\log(|\det(C)|)+\log(|\det(D)|)$. By studying all possible cases we can conclude that $m(A,B)\leq m(C,D)$. \begin{enumerate}
        \item[(i)] If $A=C$ and $B=D$, then $m(A,B)=m(C,D)$
        \item[(ii)] If $A=C$ and $\log(|\det(B)|)<\log(|\det(D)|)$, then $m(A,B)\leq m(C,D)$ because \begin{align*}
            &\log(|\det(AB)|)=\log(|\det(A)|)+\log(|\det(B)|)<\log(|\det(A)|)+\log(|\det(D)|)=\\&=\log(|\det(C)|)+\log(|\det(D)|)=\log(|\det(CD)|)
        \end{align*}
        \item[(iii)] If $B=D$ and $\log(|\det(A)|)<\log(|\det(C)|)$, by repeating the same argument in (ii) we obtain that $m(A,B)\leq m(C,D)$.
    \end{enumerate}
    
    This shows that $GL_n(\mathbb{R})$ with the partial order defined above is a connected, non-compact, non-abelian Alexandroff paratopological group. Note also that the elements of $SL_n(\mathbb{R})=\{A\in GL_n(\mathbb{R})~|\det(A)=1\}$ form an antichain containing the identity. Moreover, every subgroup of $GL_n(\mathbb{R})$  can be seen as an Alexandroff paratopological group with the topological structure inherited from the partial order defined on $GL_n(\mathbb{R})$.

\end{ex}

\section{Topological properties}\label{sec_topological}

\subsection{Minimal open and closed sets of the identity element}

This subsection focuses on fundamental properties of the open and closed sets around the identity element in a non‑compact Alexandroff paratopological group. We also show that studying these sets yields structural information about the entire space.

\begin{thm}\label{prop_no_hay_orden_finito_entornos_neutro}
    Let $X$ be a non-compact Alexandroff paratopological group. Then there is no element in $(U_1\cup F_1)\setminus \{ 1\}$ with finite order.
\end{thm}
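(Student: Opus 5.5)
Suppose, for a contradiction, that some $x\in (U_1\cup F_1)\setminus\{1\}$ has finite order $n\geq 2$, so $x^n=1$. The natural tool is continuity of the translations. By Definition~\ref{def_topological group}(i), every $L_y$ and $R_y$ is continuous, hence order-preserving. Iterating left translation by $x$ should give a strictly monotone chain $1<x<x^2<\cdots$, which must close up into a cycle; antisymmetry of the partial order then yields the contradiction.

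First consider $x\in F_1\setminus\{1\}$, so $1<x$. Applying $L_x$, which is order-preserving, gives $x\leq x^2$. Since $L_x$ is a bijection, $x\neq x^2$, so $x<x^2$. Inductively, applying $L_x$ to $x^{k-1}<x^k$ gives $x^{k}<x^{k+1}$, and transitivity yields the chain
\[
1<x<x^2<\cdots<x^{n-1}<x^n=1 .
\]
In particular $1<1$, which is a contradiction. The same computation can also be phrased as $1<x\le x^{n}=1$ via $1\le x^{k}$ for all $k$.

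Now consider $x\in U_1\setminus\{1\}$, so $x<1$. Here I would either run the same argument with the reversed inequalities, obtaining $1=x^n<x^{n-1}<\cdots<x<1$, or reduce to the previous case. For the reduction, Lemma~\ref{lem_aux_x_en_u_1_entonces_inverso_f_1} with its order-dual argument (apply $R_{x^{-1}}$ to $x<1$) gives $x^{-1}>1$. The element $x^{-1}$ also has order $n$, so the first case applies to it.

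The only delicate point is getting strict inequalities, which requires injectivity of the translations rather than anything more. Notably, the argument never uses the non-compactness hypothesis. That hypothesis only guarantees, through Theorem~\ref{thm_no_existen_finitos_paratopologicos_grupos}, that the statement is not vacuous, so I expect no real obstacle.
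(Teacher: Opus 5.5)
Your proof is correct and is essentially the paper's argument: iterate the order-preserving bijection $L_x$ on $1<x$ to obtain the cycle $1<x<\cdots<x^n=1$, and handle $x<1$ symmetrically. You are also right that non-compactness is never used, though strictly speaking it is compactness that makes the statement vacuous, via Theorem~\ref{thm_no_existen_finitos_paratopologicos_grupos}; non-compactness does not guarantee the statement is non-vacuous, since an infinite discrete group is a counterexample.
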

\begin{proof}
    Suppose there exists $x\in F_1\setminus \{1\}$ such that $x\neq 1$ and $x^n=1$ for some positive integer number $n$. If $x>1$, then, by applying $L_x$ successively, $$x^n=1>x^{n-1}>\cdots>x^2>x>1,$$
    which entails a contradiction. The same argument also applies when $x\in U_1$.
\end{proof}

\begin{rem} 

Theorem \ref{prop_no_hay_orden_finito_entornos_neutro} shows that there are no elements of finite order in $(U_1\cup F_1)\setminus \{1\}$ for any non-compact Alexandroff paratopological group. However, it is possible to have elements of finite order. For example, consider $GL_n(\mathbb{R})$ from Example \ref{ex_matrices_invertibles}. It is evident that for every permutation matrix $P$, one has that $P^n$ is the identity matrix.  This does not contradict Theorem \ref{prop_no_hay_orden_finito_entornos_neutro} because $P$ is not comparable to the identity matrix.
    
\end{rem}

As an immediate consequence of Theorem \ref{prop_no_hay_orden_finito_entornos_neutro}, we deduce the following result: 
\begin{cor}
    Let $X$ be a non-compact Alexandroff paratopological group. If $x\in X$ has finite order, then $\{1,x \}$ is an antichain in $X$.
\end{cor}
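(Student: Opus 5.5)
The corollary follows directly from Theorem~\ref{prop_no_hay_orden_finito_entornos_neutro}. We take $x \in X$ of finite order and want to show that $\{1,x\}$ is an antichain. If $x=1$ the set is a singleton, hence trivially an antichain. So we assume $x \neq 1$ and argue by contradiction: suppose $x$ is comparable with $1$, that is, either $x<1$ or $x>1$.

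In the first case $x \in U_1\setminus\{1\}$, since $U_1=\{y \mid y\le 1\}$. In the second case $x\in F_1\setminus\{1\}$. Either way $x\in (U_1\cup F_1)\setminus\{1\}$ and $x$ has finite order. This contradicts Theorem~\ref{prop_no_hay_orden_finito_entornos_neutro}, which asserts that no element of $(U_1\cup F_1)\setminus\{1\}$ has finite order in a non-compact Alexandroff paratopological group. Hence $x$ and $1$ are incomparable, and $\{1,x\}$ is an antichain.

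The only point needing care is that Theorem~\ref{prop_no_hay_orden_finito_entornos_neutro} covers both comparability directions. Its proof treats $x>1$ explicitly and says the case $x\in U_1$ is analogous: applying $L_x$ to $x<1$ gives the strictly decreasing chain $1=x^n<x^{n-1}<\cdots<x<1$, a contradiction. Alternatively, one can reduce to the first case by noting that if $x<1$ then $x^{-1}$ also has finite order and $x^{-1}>1$ by Lemma~\ref{lem_aux_x_en_u_1_entonces_inverso_f_1} (or Lemma~\ref{lem_opuesto_u_1}). With the theorem taken as stated, there is no real obstacle; the corollary is a rephrasing of it in terms of comparability.
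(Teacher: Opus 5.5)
Your proof is correct and is exactly the paper's route: the paper gives no separate argument and simply records the corollary as an immediate consequence of Theorem~\ref{prop_no_hay_orden_finito_entornos_neutro}, and your case split $x<1$ versus $x>1$ is the unpacking of that. One small citation point in your optional aside: Lemma~\ref{lem_aux_x_en_u_1_entonces_inverso_f_1} literally gives $x\ge 1\Rightarrow x^{-1}\le 1$, and the direction you use, $x\le 1\Rightarrow x^{-1}\ge 1$, follows from the same one-line translation argument or from Lemma~\ref{lem_opuesto_u_1} read as $U_1^{-1}=F_1$.
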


\begin{thm}\label{thm_strong_local_homogeneuos}
    Let $X$ be a non-compact Alexandroff paratopological group. For any pair of points $x,y\in X$, $U_x$ and $F_x$ are homeomorphic to $U_1$ and $F_1$, respectively.
\end{thm}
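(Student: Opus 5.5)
The plan is to use left translations. For a fixed $x\in X$, the map $L_x\colon X\to X$, $L_x(y)=xy$, is continuous, because it is the restriction of the continuous multiplication $m$ to the slice $\{x\}\times X$. Its inverse $L_{x^{-1}}$ is also a left translation, hence continuous as well. So $L_x$ is a homeomorphism of $X$ onto itself. Equivalently, in the poset language of Section~\ref{sec_intro}, $L_x$ is an order automorphism: $y\le z$ if and only if $xy\le xz$.

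Next we show that a homeomorphism $h\colon X\to X$ carries minimal open sets to minimal open sets and minimal closed sets to minimal closed sets. Concretely, $h(U_y)=U_{h(y)}$ and $h(F_y)=F_{h(y)}$. This is immediate from the order description $U_y=\{z\mid z\le y\}$ and $F_y=\{z\mid z\ge y\}$, since $h$ and $h^{-1}$ are both order-preserving. Applying this with $h=L_x$ and $y=1$ gives $L_x(U_1)=U_x$ and $L_x(F_1)=F_x$. The restrictions $L_x|_{U_1}\colon U_1\to U_x$ and $L_x|_{F_1}\colon F_1\to F_x$ are then homeomorphisms with respect to the subspace topologies, with inverses $L_{x^{-1}}|_{U_x}$ and $L_{x^{-1}}|_{F_x}$.

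Finally, for an arbitrary pair $x,y$ we compose: $L_{yx^{-1}}$ maps $U_x$ homeomorphically onto $U_y$, and likewise $F_x$ onto $F_y$. So all minimal open (respectively closed) sets are mutually homeomorphic, and in particular each is homeomorphic to $U_1$ (respectively $F_1$). The same argument works with right translations $R_x$, which would give the analogous statement $U_1x=U_x$.

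There is no real obstacle here. The only point that needs care is that continuity of $m$ in the product topology gives continuity of each $L_x$. This holds because $y\mapsto(x,y)$ is continuous into $X\times X$. Note that neither non-compactness nor Lemma~\ref{lem_aux_x_en_u_1_entonces_inverso_f_1} is needed; the hypothesis only rules out the degenerate discrete case of Theorem~\ref{thm_no_existen_finitos_paratopologicos_grupos}.
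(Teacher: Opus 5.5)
Your proposal is correct and follows the paper's proof: both use that the left translation $L_x$ is a homeomorphism (with inverse $L_{x^{-1}}$) carrying $U_1$ and $F_1$ onto $U_x$ and $F_x$. You also fill in details the paper leaves implicit, namely continuity of $L_x$ via the slice map $y\mapsto(x,y)$, the identity $h(U_y)=U_{h(y)}$ for a homeomorphism $h$, and the remark that non-compactness is not actually needed.
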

\begin{proof}
    By hypothesis, the map $L_x:X\rightarrow X$ is a homeomorphism. Clearly, $L_x(U_1)$ (resp., $L_x(F_1)$) is homeomorphic to $U_x$ (resp., $F_x$). From this, we conclude the result. 
\end{proof}

\begin{cor}\label{lem_auxiliar_probar_hyperconexos}
    Let $X$ be a non-compact Alexandroff paratopological group. If $x,y,z\in X$ are such that $z < x,y$ (resp., $x,y < z$), then there exists $\overline{z}\in X$ such that $\overline{z} > x,y$ (resp.,\ $\overline{z} < x,y$).
\end{cor}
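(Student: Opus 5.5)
Given $z<x,y$, I want an element $\overline z$ above both $x$ and $y$, obtained by pushing $z$ through a suitable translation. Continuity of multiplication makes every translation $L_a$, $R_a$ order-preserving. Since $L_a$ has inverse $L_{a^{-1}}$, it is an order isomorphism, so strict inequalities are preserved.

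First I reduce to the identity. Put $a=z^{-1}x$ and $b=z^{-1}y$. Applying $L_{z^{-1}}$ to $z<x$ and $z<y$ gives $1<a$ and $1<b$, so $a,b\in F_1$. Conversely, it suffices to find $w$ with $w>a$ and $w>b$, because then $\overline z:=zw$ satisfies $\overline z>za=x$ and $\overline z>zb=y$ by applying $L_z$.

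Next I take the candidate $w=ab$.
\begin{itemize}
\item Applying $L_a$ to $1<b$ gives $a<ab$.
\item Applying $R_b$ to $1<a$ gives $b<ab$.
\end{itemize}
Thus $w=ab$ works, and translating back,
\[
\overline z = z\,(z^{-1}x)(z^{-1}y) = x z^{-1} y .
\]
One can also verify directly that $xz^{-1}y>xz^{-1}z=x$ via $L_{xz^{-1}}$, and $xz^{-1}y>zz^{-1}y=y$ via $R_{z^{-1}y}$, using that $R_{z^{-1}y}$ is an order isomorphism with inverse $R_{y^{-1}z}$.

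The dual case $x,y<z$ is identical with the inequalities reversed. Set $\overline z=xz^{-1}y$. Then $L_{xz^{-1}}$ applied to $y<z$ gives $xz^{-1}y<x$, and $R_{z^{-1}y}$ applied to $x<z$ gives $xz^{-1}y<y$. Alternatively, one can pass to $X^{op}$, which is again an Alexandroff paratopological group since order-preserving maps remain order-preserving for the opposite orders.

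The main subtlety is making sure the inequalities stay strict, since continuity alone only preserves $\le$. This is settled by noting that translations are homeomorphisms, hence injective. I expect no real obstacle. Non-compactness is not needed beyond the standing hypotheses; only the continuity of multiplication and the bijectivity of translations are used.
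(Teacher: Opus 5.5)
Your argument is correct, and it takes a different route from the paper.

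The paper argues structurally. It rewrites $z<x,y$ as $U_x\cap U_y\neq\emptyset$. It then combines the homogeneity of Theorem~\ref{thm_strong_local_homogeneuos} with the duality $U_1^{\mathrm{op}}=F_1$ of Lemma~\ref{lem_opuesto_u_1}, which rests on inversion reversing the order. This gives $U_x^{\mathrm{op}}\cong F_x$ and $U_y^{\mathrm{op}}\cong F_y$, from which the paper asserts $F_x\cap F_y\neq\emptyset$.

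You instead exhibit the explicit witness $\overline{z}=xz^{-1}y$ and verify it with one left and one right translation, never using inversion or the opposite order. This buys three things:
\begin{enumerate}
\item It is more elementary, needing only continuity and bijectivity of translations.
\item It shows that non-compactness plays no role.
\item It fills a step the paper leaves implicit. The two anti-isomorphisms behind the paper's argument are different maps, $u\mapsto xu^{-1}x$ on $U_x$ and $u\mapsto yu^{-1}y$ on $U_y$. By themselves they do not send the common point $z$ to a common point of $F_x$ and $F_y$.
\end{enumerate}

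To make the paper's route rigorous, one would use the single order-reversing bijection $u\mapsto xu^{-1}y$. It maps $U_x$ into $F_y$ and $U_y$ into $F_x$, and sends $z$ exactly to your $\overline{z}$. What that version adds over yours is only the conceptual link with the $U_1$--$F_1$ duality.
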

\begin{proof}
    We prove the first case, since the other one is analogous. The condition $x,y > z$ is equivalent to $U_x \cap U_y \neq \emptyset$. By Theorem~\ref{thm_strong_local_homogeneuos} and Lemma~\ref{lem_opuesto_u_1}, we have that $U_x^{\mathrm{op}} = F_x$ and $U_y^{\mathrm{op}} = F_y$. Therefore, $F_x \cap F_y \neq \emptyset$, which ensures the existence of $\overline{z}$.
\end{proof}

Theorem \ref{thm_strong_local_homogeneuos} tells that every non-compact Alexandroff paratopological group satisfies a stronger version of the definition of being locally homogeneous. Moreover, from this result, it is immediate the proof of the following result.

\begin{cor}\label{cor_weak_beat_si_1_lo_es}
    Let $X$ be a non-compact Alexandroff paratopological group and $x\in X$. Then $x$ is a beat point if and only if $1$ is a beat point. Particularly, every point of $X$ is a beat point if and only if $1$ is a beat point.
\end{cor}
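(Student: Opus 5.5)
The plan is to transport the beat-point condition from $x$ to $1$ (and back) along a left translation, which is a homeomorphism by Theorem~\ref{thm_strong_local_homogeneuos}.

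Fix $x\in X$. The map $L_x:X\to X$ is continuous, and its inverse $L_{x^{-1}}$ is continuous as well, so $L_x$ is a homeomorphism. In the poset language it is therefore an order automorphism of $X$ sending $1$ to $x$. Since $L_x$ preserves the order and is bijective, it maps $U_1$ onto $U_x$ and $F_1$ onto $F_x$. It also maps $U_1\setminus\{1\}$ onto $U_x\setminus\{x\}$, and $F_1\setminus\{1\}$ onto $F_x\setminus\{x\}$.

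Being a beat point is a purely order-theoretic property, and order isomorphisms preserve maxima and minima of subsets. Suppose $U_1\setminus\{1\}$ has a maximum $m$. Then $L_x(m)=xm$ is the maximum of $U_x\setminus\{x\}$. Dually, a minimum of $F_1\setminus\{1\}$ is sent to a minimum of $F_x\setminus\{x\}$. Hence if $1$ is a down (resp. up) beat point, so is $x$.

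For the converse we apply the same argument to $L_{x^{-1}}$, which sends $x$ to $1$. This gives the equivalence.

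The ``particularly'' clause follows at once: if $1$ is a beat point, then every $x$ is one by the equivalence, and the other direction is trivial.

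The only point needing care is that the restricted map $U_1\to U_x$ from Theorem~\ref{thm_strong_local_homogeneuos} must send the point $1$ to $x$. An arbitrary homeomorphism between the two sets might not do this. Using $L_x$ explicitly, rather than just citing the abstract homeomorphism, removes this obstacle.
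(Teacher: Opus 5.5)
Your argument is correct and is essentially the paper's: the corollary is stated as an immediate consequence of Theorem~\ref{thm_strong_local_homogeneuos}, whose proof is precisely that the left translation $L_x$ is a homeomorphism (an order automorphism) carrying $U_1$ and $F_1$ onto $U_x$ and $F_x$, so the maximum of $U_1\setminus\{1\}$ (resp.\ minimum of $F_1\setminus\{1\}$) is transported exactly as you describe. Your closing caveat is harmless but unnecessary: any homeomorphism $U_1\to U_x$ is an order isomorphism, so it must send the maximum $1$ to the maximum $x$, and the abstract homeomorphism from the theorem would already suffice.
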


\begin{ex}\label{ex_z_z}
    Let us consider $\mathbb{Z}\oplus \mathbb{Z}$ with $(a,b)\leq (c,d)$ if and only if $a\leq c$ and $b\leq d$, and $(a,b)+(c,d)=(a+c,b+d)$, where $+$ stands for the usual sum. It is easy to verify that $\mathbb{Z}\oplus \mathbb{Z}$ is a non-compact Alexandroff paratopological group. We have depicted in Figure \ref{fig:diagram_hasse_z_z}  its Hasse diagram and also the open set $U_{(0,0)}$ in red and the closed set $F_{(0,0)}$ in blue. From Proposition \ref{cor_weak_beat_si_1_lo_es}, we deduce that $\mathbb{Z}\oplus \mathbb{Z}$ does not have beat points.
    \begin{figure}[ht]
        \centering
        \includegraphics[width=0.8\linewidth]{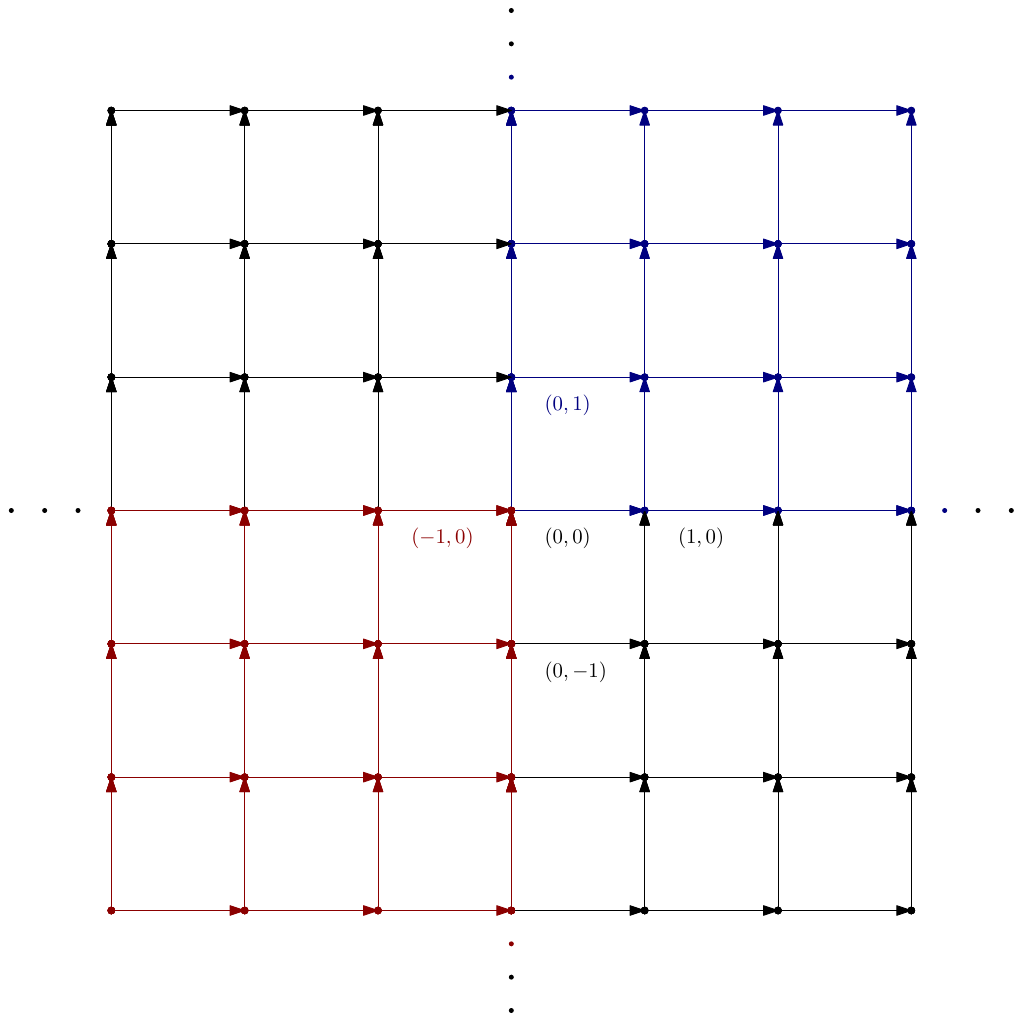}
        \caption{Hasse diagram of $\mathbb{Z}\oplus \mathbb{Z}$ from Example \ref{ex_z_z}.}
        \label{fig:diagram_hasse_z_z}
    \end{figure}
\end{ex}

\begin{thm}
    Let $X$ be a non‑compact Alexandroff paratopological group. Then a point $x\in X$ is a beat point if and only if $X$ is isomorphic to the Alexandroff paratopological group $(\mathbb{Z},\leq)$. In particular, $X$ is contractible.
\end{thm}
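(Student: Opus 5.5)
The plan is to translate everything to the identity, show that $1$ has a unique lower and a unique upper cover that are inverse to each other, and then read off the whole order from powers of one element. Throughout I take $X$ connected, as in the surrounding results; without connectedness, a product such as $\mathbb{Z}\times\{0,1\}$ with the second factor discrete already has beat points. The implication ``$\Leftarrow$'' is immediate: in $(\mathbb{Z},\leq)$ the point $n-1$ is the maximum of $U_n\setminus\{n\}$.

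\emph{Step 1: covers of the identity.} By Corollary~\ref{cor_weak_beat_si_1_lo_es}, $1$ is a beat point. Replacing $X$ by $X^{op}$ if needed, we may assume $1$ is a down beat point, so $U_1\setminus\{1\}$ has a maximum $a$. Every $y<1$ then satisfies $y\le a$, and $a\prec 1$.

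\emph{Step 2: transporting the covers.} Inversion restricts to an order-reversing bijection $U_1\to F_1$ (Lemma~\ref{lem_aux_x_en_u_1_entonces_inverso_f_1} and Lemma~\ref{lem_opuesto_u_1}). I will check carefully that it reverses the order, which is Lemma~\ref{lem_opuesto_u_1} read properly. It follows that $b:=a^{-1}$ is the minimum of $F_1\setminus\{1\}$. Applying the homeomorphisms $L_x$ and $R_x$ (Theorem~\ref{thm_strong_local_homogeneuos}), every $x$ has a unique lower cover, equal to both $xa$ and $ax$, and a unique upper cover, equal to both $xb$ and $bx$. In particular $a$ is central, and every $y>x$ satisfies $y\ge xb$.

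\emph{Step 3: the order is the chain $\{a^n\}$.} I would show that every $y<1$ equals $a^n$ for some $n\ge1$. By Step 2, $y\le a$. If $y\ne a$, then $a^{-1}y<1$, hence $a^{-1}y\le a$, i.e.\ $y\le a^2$, and we iterate. Dually, every $y>1$ would be a positive power of $b$. Once every comparison $x<y$ is known to have the form $y=xb^k$ with $k\ge1$, connectedness (every point is joined to $1$ by a finite fence) gives $X=\{a^n : n\in\mathbb{Z}\}$. Since $a\ne1$ has infinite order by Theorem~\ref{prop_no_hay_orden_finito_entornos_neutro}, these powers are distinct. The map $n\mapsto b^n$ is then a group isomorphism $\mathbb{Z}\to X$ that is order-preserving in both directions, hence an isomorphism of Alexandroff paratopological groups. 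Contractibility of $(\mathbb{Z},\le)$ would follow by shifting maps monotonically via the homotopy criterion recalled in the preliminaries.

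\emph{Main obstacle.} The iteration in Step 3 might never terminate: $y$ could lie below every $a^n$. I am not sure this can be excluded, and in fact I suspect it cannot. The lexicographically ordered group $\mathbb{Z}\oplus\mathbb{Z}$ is a connected Alexandroff paratopological group in which $(0,-1)$ is the maximum of $U_{(0,0)}\setminus\{(0,0)\}$, yet it is not order-isomorphic to $\mathbb{Z}$. I would therefore first test the statement against this example. If the example holds up, the argument needs an extra hypothesis, such as every interval $[x,y]$ being finite (local finiteness of the Hasse diagram). Under that hypothesis Step 3 terminates, since the chain $y<\cdots$ inside $[y,1]$ must stop, and the plan above goes through.
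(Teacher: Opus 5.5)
Your suspicion is correct: the theorem as printed is false, so the failure is in the statement, not in your plan. The lexicographically ordered $\mathbb{Z}\oplus\mathbb{Z}$ is a valid counterexample even with connectedness. The order is total and translation invariant, so $m$ is monotone for the product order. It is connected, since it is a chain. It is non-compact, since the cover $\{U_{(n,0)}\}_{n\in\mathbb{Z}}$ has no finite subcover. The point $(0,-1)$ is the maximum of $U_{(0,0)}\setminus\{(0,0)\}$. Yet the group is not cyclic, and the interval $[(0,0),(1,0)]$ is infinite, so the space is not even homeomorphic to $(\mathbb{Z},\leq)$. Your disconnected example is also valid; it is essentially the paper's own $\mathbb{Z}\oplus\mathbb{Z}_n$ from Example~\ref{ex_generalizacion_join}, which has radius $1$ and hence beat points.

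The paper's proof breaks exactly where your iteration does. It takes $x_1$ as the minimum of $F_1\setminus\{1\}$, $x_2$ as the minimum of $F_{x_1}\setminus\{x_1\}$, and so on, and then asserts that $F_1$ ``consists of'' the chain $1\prec x_1\prec x_2\prec\cdots$. The induction only shows that each element of $F_1$ is either some $x_n$ or an upper bound of all of them. In the lexicographic example $x_n=(0,n)$, and $(1,0)$ is such an upper bound. The paper's last step, reading off all of $X$ from the chain through $1$, also silently uses connectedness, which is not among its hypotheses.

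So no proof of the stated theorem can exist, and your proposal has no gap of its own. With the hypotheses repaired, your Steps 1--3 form a complete proof, and Step 3 pins down exactly what is needed. Every $y<1$ is either a power of $a$ or a common lower bound of $\{a^n : n\geq 1\}$. Hence, for connected $X$, the conclusion holds precisely when these powers have no common lower bound, equivalently (by inversion) when the powers of $b$ are unbounded above. Your finite-interval hypothesis implies this.

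Your route is also more careful than the paper's at the points that actually matter:
\begin{itemize}
\item You check that inversion is globally order-reversing: from $x\leq y$, apply $L_{x^{-1}}$ and then $R_{y^{-1}}$.
\item You derive that every comparison $x<y$ has the form $y=xb^k$ with $k\geq 1$.
\item From that and connectedness via finite fences, you get $X=\langle b\rangle$ with the order of $\mathbb{Z}$. The paper instead relies on a claimed chain structure of $U_1\cup F_1$ that it never establishes.
\end{itemize}
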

\begin{proof}
    We only prove the nontrivial implication. Without loss of generality, we assume that $x$ is an up beat point. By Corollary~\ref{cor_weak_beat_si_1_lo_es}, every point $x \in X$ is an up beat point. We now show that $F_1$ is a chain. By hypothesis, $x_1 \succ 1$, where $x_1$ is the minimal element of $F_1 \setminus \{1\}$. Repeating the same argument, we obtain $x_2 \succ x_1 \succ 1$, where $x_2$ is the minimal element of $F_{x_1} \setminus \{x_1\}$. Continuing inductively, we see that $F_1$ consists of the chain
\[
    \cdots \succ x_n \succ x_{n-1} \succ \cdots \succ x_2 \succ x_1 \succ 1.
\]

Finally, Lemma~\ref{lem_opuesto_u_1} implies that $X$ is homeomorphic to $(\mathbb{Z},\leq)$, since $X$ is given by the chain
\[
    \cdots \succ x_2 \succ x_1 \succ 1 \succ x_1^{-1} \succ x_2^{-1} \succ \cdots.
\]
Moreover, $x_1$ and $x_1^{-1}$ clearly generate $X$. Hence, $X$ is isomorphic to $(\mathbb{Z},\leq)$.
\end{proof}

\textbf{Open problem.} A weaker notion than that of a beat point is the concept of a weak beat point (see \cite{barmak2011algebraic}). This raises the following question: \emph{can we characterize non‑compact Alexandroff paratopological groups with a weak beat point?}

\subsection{The product of feebly bounded sets is feebly bounded}

This section is devoted to introducing the fundamental terminology required for tackling the open problems stated in \cite[Open Problem 6.10.1 and 6.10.2]{TArhangelskiiTkachenko2008}. Moreover, we also adapt some of these notions to our framework, obtaining simpler characterizations.

\begin{df}
   A space $X$ is called \emph{feebly compact} if every locally finite family of open sets in $X$ is finite. Recall that a family $\mathcal{U}$ of open sets is \emph{locally finite} whenever, for every $x \in X$, there exists an open neighborhood of $x$ that intersects only finitely many members of $\mathcal{U}$.
\end{df}

\begin{df}
    Given a paratopological group $X$, we say that $X$ is \emph{2-pseudocompact} if $\bigcap_{n\in \omega} \overline{U_n^{-1}}$ is non-empty for every decreasing sequence $\{U_n~:~n\in \omega \}$ of non-empty open sets in $X$.
\end{df}
\begin{lem}\label{lem_2_pseudocompact}
    Let $X$ be a non-compact Alexandroff paratopological group. Then $X$ is 2-pseudocompact.
\end{lem}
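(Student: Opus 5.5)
The plan is to make the closures explicit using the Alexandroff structure, and then reduce the statement to a question about the sets $U_n$ themselves. The key tool is Lemma~\ref{lem_inversas_cerrados_en_abiertos}: for every open $U$ in a non-compact Alexandroff paratopological group, $U^{-1}$ is closed.

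\textbf{Step 1: closures are the sets themselves.} Recall that in an Alexandroff space the closure of a subset $S$ is the upper set $F_S$. Let $\{U_n : n\in\omega\}$ be a decreasing sequence of non-empty open sets. By Lemma~\ref{lem_inversas_cerrados_en_abiertos} each $U_n^{-1}$ is already closed, so $\overline{U_n^{-1}}=U_n^{-1}=F_{U_n^{-1}}$.

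\textbf{Step 2: reduce to the $U_n$.} Inversion is a bijection, so it commutes with intersections:
\[
\bigcap_{n\in\omega}\overline{U_n^{-1}}=\bigcap_{n\in\omega}U_n^{-1}=\Bigl(\bigcap_{n\in\omega}U_n\Bigr)^{-1}.
\]
Hence the statement is equivalent to $\bigcap_{n}U_n\neq\emptyset$. By Lemma~\ref{lem_opuesto_u_1} and Theorem~\ref{thm_strong_local_homogeneuos}, inversion reverses the order, and each $U_x$ looks like $U_1$. So this is a statement about decreasing families of lower sets.

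\textbf{Step 3: produce a common point.} I would choose points $x_n\in U_n$. Each $U_n$ is a union of minimal neighbourhoods, so $U_{x_n}\subseteq U_n$. Since the sequence is decreasing, $U_{x_m}\subseteq U_n$ for all $m\ge n$. I would then try to find a point lying below infinitely many, and hence (by monotonicity) all, of the $x_m$. The idea is to combine Corollary~\ref{lem_auxiliar_probar_hyperconexos}, which gives common lower and upper bounds of pairs, with the translations $L_x$, to push the $x_m$ into a single $U_z$.

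\textbf{Main obstacle.} Step 3 is where I expect the argument to break down. Corollary~\ref{lem_auxiliar_probar_hyperconexos} only yields a common lower bound for finitely many points at a time. A descending sequence of lower sets, such as $U_{-n}$ in $(\mathbb{Z},\le)$ from Example~\ref{ex_reales_orden_usual}, has empty intersection. In that case the $U_n^{-1}=[n,\infty)$ are closed sets with empty intersection, so Steps 1 and 2 show the displayed intersection is empty. I would first try to extract a point from the hypotheses by an exhaustion argument on $F_1=U_1^{op}$. If that fails, the argument shows the claim can only hold for sequences whose intersection is non-empty, for example when the $U_n$ eventually stabilize.
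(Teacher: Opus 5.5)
Your Steps 1 and 2 are correct and match the first half of the paper's argument. Lemma~\ref{lem_inversas_cerrados_en_abiertos} removes the closures, and since inversion is a bijection, a given sequence satisfies the condition exactly when $\bigcap_{n}U_n\neq\emptyset$. Step 3, however, cannot be completed by any argument. The counterexample in your last paragraph is correct and decisive, so state it as a disproof rather than keeping an exhaustion argument on $F_1$ in reserve. Take $(\mathbb{Z},\le,+)$ from Example~\ref{ex_reales_orden_usual}, which satisfies every hypothesis. The sets $U_n=\{k\in\mathbb{Z} : k\le -n\}$ are non-empty lower sets, hence open, and they are decreasing. Each $U_n^{-1}=\{k : k\ge n\}$ is an upper set, hence closed. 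Therefore
\[
\bigcap_{n\in\omega}\overline{U_n^{-1}}=\bigcap_{n\in\omega}\{k\in\mathbb{Z} : k\ge n\}=\emptyset ,
\]
and the lemma is false as stated. An even cruder counterexample is any infinite discrete group, e.g.\ $U_n=\{k\ge n\}$ in discrete $\mathbb{Z}$.

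The paper's proof contains precisely the gap you located. After noting $\overline{U_n^{-1}}=U_n^{-1}$, it simply writes ``Let $x\in U_n$ for every $n\in\omega$''. That assumes a point of $\bigcap_n U_n$, which by your Step 2 is the entire content of the claim. The only correct statement here is your conditional one: a given decreasing sequence satisfies the 2-pseudocompactness condition if and only if $\bigcap_n U_n\neq\emptyset$.

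Theorem~\ref{every_alexandroff_feebly_compact} is derived from this lemma, so that derivation is unsupported as well. The theorem itself fails for infinite discrete groups, where the singletons form an infinite locally finite family of open sets. For connected groups, feeble compactness can instead be obtained directly from Theorem~\ref{every_paratopological_alexandroff_hyperconexosiiiiii}. In a hyperconnected space every neighbourhood of a point meets every non-empty open set, which forces a locally finite family of non-empty open sets to be finite.
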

\begin{proof}
    By Lemma \ref{lem_inversas_cerrados_en_abiertos}, for any $U_n$ with $n\in \omega$, $U_n^{-1}$ is a closed set. Hence, $\overline{U_n^{-1}}=U_n^{-1}$. Let $x\in U_n$ for every $n\in \omega$. Then $x^{-1}\in U_n^{-1}$. Indeed, $\{U^{-1}_n:n\in \omega\}$ is a decreasing sequence of non-empty closed sets.
\end{proof}

\begin{thm}\label{every_alexandroff_feebly_compact}
    Let $X$ be a non-compact Alexandroff paratopological group. Then $X$ is feebly compact.
\end{thm}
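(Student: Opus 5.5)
The plan is to argue by contradiction and reduce the statement to the 2-pseudocompactness provided by Lemma~\ref{lem_2_pseudocompact}. Suppose $X$ is a non-compact Alexandroff paratopological group and $\mathcal{U}$ is an infinite locally finite family of non-empty open sets. I would extract a countable subfamily $\{U_k : k\in\omega\}$ of pairwise distinct members; it is still locally finite. Then I would pass to the tails
\[
V_n=\bigcup_{k\geq n} U_k ,\qquad n\in\omega .
\]
These form a decreasing sequence of non-empty open sets of $X$, so they are exactly the kind of sequence to which the definition of 2-pseudocompactness applies.

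Next I would apply Lemma~\ref{lem_2_pseudocompact} to obtain a point $x\in\bigcap_{n\in\omega}\overline{V_n^{-1}}$. By Lemma~\ref{lem_inversas_cerrados_en_abiertos}, each $V_n^{-1}$ is already closed, so $\overline{V_n^{-1}}=V_n^{-1}$. Hence $x^{-1}\in V_n$ for every $n$, which means the point $p=x^{-1}$ lies in $U_k$ for infinitely many indices $k$.

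To conclude I would use the Alexandroff structure. Every open neighbourhood of $p$ contains the minimal open set $U_p$, and $U_p$ contains $p$, so $U_p$ meets every $U_k$ that contains $p$. Since there are infinitely many such $U_k$, no neighbourhood of $p$ meets only finitely many members of the family. This contradicts local finiteness, so every locally finite family of open sets is finite and $X$ is feebly compact.

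The step I expect to be the main obstacle is the second one: producing a point common to all the sets $V_n^{-1}$. Everything else is formal, and the whole weight rests on Lemma~\ref{lem_2_pseudocompact}. The proof of that lemma starts from a point lying in every $U_n$, which is not automatic for an arbitrary decreasing sequence of non-empty open sets. So I would first try to justify this directly for the tail sets $V_n$, using the order structure: translate by $L_y$ and use Theorem~\ref{thm_strong_local_homogeneuos} to move the sets near $1$, and use Lemma~\ref{lem_opuesto_u_1} to pass between $U_1$ and $F_1$. If no such common point can be produced, the argument fails. In that case the hypotheses would have to be strengthened, for instance by requiring connectedness together with control on the chains through $1$. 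Indeed, an infinite group with the discrete topology shows that the intersection can genuinely be empty.
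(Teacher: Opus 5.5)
The step you flagged is not just the hard part. It is false, and nothing can replace it, because the statement fails as written. Your own discrete example refutes the theorem itself, not only the intersection claim. An infinite group $X$ with the discrete topology is a non-compact Alexandroff paratopological group, and $\{\{x\}:x\in X\}$ is an infinite locally finite family of open sets. A non-discrete counterexample is Example~\ref{ex_generalizacion_join} with $\mathbb{Z}_n$ replaced by $\mathbb{Z}$: order $\mathbb{Z}\oplus\mathbb{Z}$ by $(a,b)\le(c,d)$ if and only if $a\le c$ and $b=d$. Its open components $\mathbb{Z}\times\{b\}$ form an infinite locally finite family.

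Lemma~\ref{lem_2_pseudocompact} fails even for connected groups. In $\mathbb{Z}$ with the usual order, $U_n=(-\infty,-n]\cap\mathbb{Z}$ is a decreasing sequence of non-empty open sets, but $\overline{U_n^{-1}}=[n,\infty)\cap\mathbb{Z}$ has empty intersection. The lemma's proof silently assumes $\bigcap_n U_n\neq\emptyset$. The paper's own proof is exactly Lemma~\ref{lem_2_pseudocompact} combined with the Banakh--Ravsky result that 2-pseudocompact paratopological groups are feebly compact, so it has the same hole. Your tail argument is a correct, self-contained substitute for that citation: Lemma~\ref{lem_inversas_cerrados_en_abiertos} does hold, and a point lying in infinitely many $U_k$ does violate local finiteness. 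But the input your argument consumes is not available.

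The right repair is to assume $X$ connected; all later uses of this result in the paper are in the connected setting. Then 2-pseudocompactness is unnecessary. By Theorem~\ref{every_paratopological_alexandroff_hyperconexosiiiiii}, $X$ is hyperconnected. Concretely, if $z\le x,y$ then $xz^{-1}y\ge x,y$, and symmetrically, so having a common lower bound is transitive along the zigzag paths given by connectedness. Hence for each $p$ the minimal neighbourhood $U_p$ meets every non-empty open set, and local finiteness at $p$ leaves only finitely many non-empty members.

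Note that this contradiction comes from $U_p$ \emph{meeting} infinitely many members, not from $p$ \emph{belonging} to infinitely many. In $\mathbb{Z}$ the family $\{(-\infty,-k]\cap\mathbb{Z}\}_{k\in\omega}$ is not locally finite, yet every point lies in only finitely many of its members. So trying to manufacture a common point of the tails $V_n$ via translations aims at the wrong target.

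More generally, components of an Alexandroff space are open, and here they are all translates of the identity component. So an Alexandroff paratopological group is feebly compact exactly when it has finitely many components; no control on chains through $1$ is needed.
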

\begin{proof}
    This follows from \cite[Proposition 3.13]{BanakhRavsky2020}, which states that every 2-pseudocompact paratopological group is feebly compact.
\end{proof}

\begin{df}
    A topological space $X$ is said to be \emph{hyperconnected} (resp., \emph{ultraconnected}) if the intersection of every pair of non-empty open sets (resp., closed sets) is non-empty. Equivalently, $X$ has no two disjoint non-empty open subsets (resp., closed subsets).
\end{df}
\begin{rem}\label{rem_product_hyperconnected}
    Let us recall that the arbitrary product of hyperconnected (resp., ultraconnected) topological spaces is hyperconnected (resp., ultraconnected).
\end{rem}
\begin{prop}
    Let X be a non-compact Alexandroff paratopological group. Then X is hyperconnected if and only if X is ultraconnected.
    \end{prop}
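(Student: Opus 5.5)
Hyperconnectedness and ultraconnectedness can be read off the order directly. In an Alexandroff space every non-empty open set contains some $U_x$ and every non-empty closed set contains some $F_x$. Hence $X$ is hyperconnected if and only if $U_x\cap U_y\neq\emptyset$ for all $x,y\in X$, i.e., every pair of points has a common lower bound. Likewise, $X$ is ultraconnected if and only if $F_x\cap F_y\neq\emptyset$ for all $x,y$, i.e., every pair has a common upper bound. The proposition therefore reduces to showing that, in a non-compact Alexandroff paratopological group, every pair of points has a common lower bound exactly when every pair has a common upper bound.

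This is essentially Corollary~\ref{lem_auxiliar_probar_hyperconexos}. Suppose $X$ is hyperconnected and take $x,y\in X$. There is a $z$ with $z\le x$ and $z\le y$, and we want some $\overline z$ with $\overline z\ge x,y$. If $z=x$ or $z=y$ the claim is trivial: for instance, if $x\le y$ we take $\overline z=y$. Otherwise $z<x,y$, and Corollary~\ref{lem_auxiliar_probar_hyperconexos} provides the required $\overline z$. The converse implication uses the dual half of the same corollary. If an independent argument is preferred, inversion does the job, since Lemma~\ref{lem_inversas_cerrados_en_abiertos} says that inversion reverses the order ($a<b$ implies $b^{-1}<a^{-1}$, as in its proof). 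Given $x,y$, hyperconnectedness applied to $x^{-1},y^{-1}$ gives a $z$ with $z\le x^{-1},y^{-1}$. Inverting yields $z^{-1}\ge x,y$, so $F_x\cap F_y\neq\emptyset$. The argument is symmetric in the other direction.

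The main obstacle is justifying that inversion reverses the order for arbitrary comparable pairs, not only those comparable with $1$. This follows from continuity of $L_{a^{-1}}$ and $R_{b^{-1}}$: from $a<b$ we get $1< a^{-1}b$, and then $b^{-1}< a^{-1}$. The first step is strict because left translations are homeomorphisms, and the second because $R_{b^{-1}}$ sends $1$ to $b^{-1}$ and $a^{-1}b$ to $a^{-1}$. Once this is established, the equivalence is immediate.
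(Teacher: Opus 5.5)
Your proof is correct. The inversion argument in your second paragraph is the paper's proof, which cites Lemma~\ref{lem_inversas_cerrados_en_abiertos} to pass from $U_x\cap U_y\neq\emptyset$ to $F_x\cap F_y\neq\emptyset$. You also make explicit two points the paper leaves implicit. First, inversion reverses the strict order for arbitrary comparable pairs. Second, hyperconnectedness must be applied to $x^{-1},y^{-1}$, since inversion carries $U_{x^{-1}}\cap U_{y^{-1}}$ onto $F_x\cap F_y$, not $U_x\cap U_y$.

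Your primary route through Corollary~\ref{lem_auxiliar_probar_hyperconexos} is also legitimate, and it yields a stronger, pairwise statement: a pair with a common lower bound also has a common upper bound. Be aware, though, that the paper's one-line justification of that corollary (via $U_x^{op}=F_x$) only identifies these sets up to homeomorphism. By itself it does not produce a common upper bound of the same pair $x,y$. The corollary is nevertheless true by a direct translation argument. If $z<x,y$, then $1<z^{-1}y$ and $1<xz^{-1}$. Hence $\overline{z}=xz^{-1}y$ satisfies $\overline{z}>x$ and $\overline{z}>y$, and the dual argument handles $x,y<z$.
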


\begin{proof}
Suppose that X is hyperconnected and let $x, y \in X$. By hypothesis, $U_x \cap U_y\neq \emptyset$ and,
by Lemma \ref{lem_inversas_cerrados_en_abiertos}, $F_x \cap F_y\neq \emptyset$.

The proof of the other implication is identical.
\end{proof}
\begin{prop}
    Let $X$ be a non-compact Alexandroff paratopological group. Then $X$ is a directed set if and only if $X$ is hyperconnected.
\end{prop}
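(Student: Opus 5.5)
The plan is to translate both conditions into order-theoretic statements about pairs of points. Then I would pass between lower bounds and upper bounds using the symmetry between $U_1$ and $F_1$ established earlier. Here ``directed'' is understood in the usual (upward) sense: every pair $x,y\in X$ has an upper bound $z\geq x,y$.

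\emph{Step 1: reformulate hyperconnectedness.} Open sets of $X$ are exactly the lower sets, and every non-empty open set contains some minimal open set $U_x$. Hence $X$ is hyperconnected if and only if $U_x\cap U_y\neq\emptyset$ for all $x,y\in X$. Equivalently, every pair of points has a common lower bound. Dually, since closed sets are upper sets and every non-empty closed set contains some $F_x$, $X$ is ultraconnected if and only if $F_x\cap F_y\neq\emptyset$ for all $x,y$. That is exactly the statement that $X$ is directed.

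\emph{Step 2: conclude.} By Step 1, ``directed'' is the same as ``ultraconnected''. The previous proposition shows that ultraconnected is equivalent to hyperconnected for non-compact Alexandroff paratopological groups, which gives the result. To make the argument self-contained, I would alternatively argue directly with Corollary~\ref{lem_auxiliar_probar_hyperconexos}. Suppose $X$ is hyperconnected and take $x,y$ with a common lower bound $z\leq x,y$. If $z<x$ and $z<y$, the corollary yields $\overline{z}>x,y$. If instead $z=x$, then $x\leq y$ and $y$ itself is an upper bound, and symmetrically when $z=y$. The converse direction is the same argument using the ``resp.'' part of the corollary.

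The only delicate point is the non-strict inequalities: the corollary is stated for strict $z<x,y$, so the degenerate cases where the lower (or upper) bound coincides with one of the points must be handled separately as above. Beyond this, the main content has already been done in Lemma~\ref{lem_inversas_cerrados_en_abiertos} and Corollary~\ref{lem_auxiliar_probar_hyperconexos}, via the identity $U_x^{op}=F_x$. That identity is what transfers lower bounds into upper bounds.
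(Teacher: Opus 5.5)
Your proposal is correct and follows essentially the paper's route: you recast hyperconnectedness as $U_x\cap U_y\neq\emptyset$ (common lower bounds) and directedness as $F_x\cap F_y\neq\emptyset$ (common upper bounds), then link the two through Lemma~\ref{lem_inversas_cerrados_en_abiertos} and Corollary~\ref{lem_auxiliar_probar_hyperconexos}, and you treat the non-strict degenerate cases more carefully than the paper does. One caution on your closing remark: the abstract homeomorphism $U_x^{op}\cong F_x$ does not by itself give $F_x\cap F_y\neq\emptyset$; what actually transfers bounds is that inversion reverses the order (apply the hypothesis to $x^{-1},y^{-1}$), or explicitly, $z\leq x,y$ implies $xz^{-1}y\geq x,y$.
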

\begin{proof}
    Suppose $X$ is a directed set. Let $x,y\in X$. By hypothesis, there exists $z\in X$ such that $x,y<z$. Therefore, $F_x\cap F_y\neq \emptyset$. By Lemma \ref{lem_inversas_cerrados_en_abiertos}, we deduce that $U_x\cap U_y\neq \emptyset$.

    Suppose now that $X$ is hyperconnected and let $x,y\in X$. By hypothesis, $U_x\cap U_y\neq \emptyset $ and, consequently, there exists $z<x,y$. From this, we can conclcude the desired result.
\end{proof}


\begin{thm}\label{every_paratopological_alexandroff_hyperconexosiiiiii}
   Every connected non-compact Alexandroff paratopological group $X$ is hyperconnected.
\end{thm}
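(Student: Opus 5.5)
We will show that any two points $x,y\in X$ have a common lower bound, i.e.\ $U_x\cap U_y\neq\emptyset$. Since every non-empty open set contains some $U_x$, this is exactly hyperconnectedness. The tools are Corollary~\ref{lem_auxiliar_probar_hyperconexos} (zigzag flipping), Theorem~\ref{thm_strong_local_homogeneuos} and connectedness, which by the path description of connected Alexandroff spaces yields a finite fence $x=x_0,x_1,\dots,x_n=y$ in which consecutive points are comparable.

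\textbf{Step 1: shorten fences by induction on $n$.} We prove by induction on the length $n$ of a fence from $x$ to $y$ that $x$ and $y$ have a common lower bound and also a common upper bound.
\begin{itemize}
\item For $n\le 1$, the two points are comparable, so the smaller one is a lower bound and the larger one an upper bound.
\item For the inductive step, drop the last point and apply the hypothesis to $x_0,\dots,x_{n-1}$. This gives $z\le x,x_{n-1}$ and $w\ge x,x_{n-1}$.
\item If $x_{n-1}\le y$, then $w$ and $y$ are both upper bounds of the \emph{same} point $x_{n-1}$, so $x_{n-1}< w,y$ (trivial when equalities occur). The reverse form of Corollary~\ref{lem_auxiliar_probar_hyperconexos} then gives $\bar z< w,y$.
\item The point $\bar z$ is comparable to $y$ but not necessarily below $x$, so this does not yet finish the step.
\end{itemize}

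\textbf{Step 2: use a monotone pattern instead.} A cleaner route is to show that $z\le x_{n-1}\le y$ makes $z$ a common lower bound of $x$ and $y$. The remaining case $x_{n-1}\ge y$ is the one to handle.
\begin{itemize}
\item Then $x_{n-1}\ge y$ and $x_{n-1}\le w$.
\item We need a common lower bound of $z$ and $y$, where $z,y\le x_{n-1}$.
\item Corollary~\ref{lem_auxiliar_probar_hyperconexos} applied to $z,y< x_{n-1}$ gives $\bar z< z,y$.
\item Then $\bar z\le z\le x$ and $\bar z\le y$, as required.
\end{itemize}
Symmetrically, the upper bound in the case $x_{n-1}\le y$ is obtained from $w,y\ge x_{n-1}$ via the first form of the Corollary, which gives a point above $w\ge x$ and above $y$. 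The other two combinations are immediate: if $x_{n-1}\ge y$, then $w$ itself is an upper bound; if $x_{n-1}\le y$, then $z$ is a lower bound.

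So each inductive step needs the induction hypothesis to supply \emph{both} a lower and an upper bound of $x$ and $x_{n-1}$, and it produces both again for $x$ and $y$. This is why we induct on the pair of statements simultaneously. The degenerate cases where some inequality is an equality are handled trivially.

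\textbf{Main obstacle.} The delicate point is checking that Corollary~\ref{lem_auxiliar_probar_hyperconexos} really gives both directions, ``common upper bound of two points sharing a lower bound'' and its dual. Its proof depends on $U_x^{op}=F_x$, which combines Lemma~\ref{lem_opuesto_u_1} with Theorem~\ref{thm_strong_local_homogeneuos} via $L_x$. We would verify this dual form explicitly: inversion sends $U_1$ onto $F_1$, and translating by $L_x$ transports this to every point.

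With both bounds in hand, $U_x\cap U_y\neq\emptyset$ for all $x,y$, and $X$ is hyperconnected. As a by-product, the preceding Proposition then shows that $X$ is also directed and ultraconnected.
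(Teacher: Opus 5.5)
Your proof is correct and is essentially the paper's argument: connectedness gives a fence from $x$ to $y$, and Corollary~\ref{lem_auxiliar_probar_hyperconexos} collapses it zigzag by zigzag. You organize this as a direct induction on the fence length, where the paper argues by contradiction with a minimal fence; you can delete the abandoned Step~1, and the lower-bound half of your induction closes on its own, so carrying the upper bound is unnecessary. The one point to correct is your planned verification of the Corollary. The anti-isomorphism $U_x^{\mathrm{op}}\cong F_x$ only compares the shapes of $U_x$ and $F_x$ and does not by itself give a common bound. The explicit element $\bar z=xz^{-1}y$ does: monotonicity of left and right translations gives $xz^{-1}y\geq xz^{-1}z=x$ and $xz^{-1}y\geq zz^{-1}y=y$ whenever $z\leq x,y$, with all inequalities reversed when $z\geq x,y$.
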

\begin{proof}
Let $x,y \in X$. By the connectedness of $X$, there exists a minimal sequence of points $x_0,\ldots,x_n \in X$ such that $x_0 = x$, $x_n = y$, and for each $i = 0,\ldots,n-1$ we have either $x_i < x_{i+1}$ or $x_i > x_{i+1}$.  

The minimality of the sequence guarantees that for each $i$,
\[
x_i < x_{i+1} > x_{i+2} \quad \text{or} \quad x_i > x_{i+1} < x_{i+2},
\]
since otherwise we could delete the middle point, contradicting minimality.

Without loss of generality, assume that
\[
x < x_1 > x_2 < x_3 > \cdots > y.
\]

By the second part of Corollary~\ref{lem_auxiliar_probar_hyperconexos}, there exists $\overline{x}_1$ such that  
\[
x > \overline{x}_1 < x_2.
\]
Replace $x_1$ by $\overline{x}_1$ in the sequence. Then we obtain
\[
x > \overline{x}_1 < x_2 < x_3 > \cdots>y,
\]
which contradicts the minimality of the sequence unless $x_2 = y$.

Therefore, for any two points $x,y \in X$ we have either $U_x \cap U_y \neq \emptyset$ or $F_x \cap F_y \neq \emptyset$, which proves the result.
\end{proof}



\begin{df}
    A subset $B$ of a space $X$ is \emph{feebly bounded} in $X$ if every locally finite family of open sets in $X$ contains at most finitely elements that meet $B$.
\end{df}

\begin{lem}\label{lem_feebly_bounded_finito_no}
    Let $X$ be a connected non-compact Alexandroff paratopological group. If $B\subset X$ is finite, then $B$ is not feebly bounded. 
\end{lem}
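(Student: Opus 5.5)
The approach is to exhibit, for a given finite $B\subset X$, an infinite locally finite family of open sets of $X$ such that infinitely many of its members meet $B$. Since $B$ is finite, the pigeonhole principle reduces this to a single point $b\in B$. Then we need infinitely many members of a locally finite open family that all contain (or at least meet) a neighbourhood of $b$. By Theorem~\ref{thm_strong_local_homogeneuos}, the left translation $L_b$ is a homeomorphism with $L_b(1)=b$. So it suffices to treat $b=1$ and then translate the family by $L_b$; translation preserves openness and local finiteness.

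For the construction at $1$, I would use connectedness and the fact that $X$ is not a single point to pick $x\neq 1$ comparable to $1$, say $x>1$ (the case $x<1$ is handled through $X^{op}$ and Lemma~\ref{lem_opuesto_u_1}). By Theorem~\ref{prop_no_hay_orden_finito_entornos_neutro} and repeated use of $L_x$, this gives an infinite strictly increasing chain $1<x<x^2<\cdots$ and, via Lemma~\ref{lem_aux_x_en_u_1_entonces_inverso_f_1}, a strictly decreasing chain $1>x^{-1}>x^{-2}>\cdots$. Candidate families are the minimal open sets $U_{x^{n}}$ or $U_{x^{-n}}$, or the open sets of the form $X\setminus F_{x^{n}}$. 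For each candidate I would check local finiteness point by point using the minimal neighbourhood $U_y$. In an Alexandroff space a family $\mathcal U$ is locally finite exactly when each $U_y$ meets only finitely many members of $\mathcal U$.

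I expect this verification to be the main obstacle, and I am not confident the lemma can be proved as stated. In the candidates above the sets are nested around $1$, so $U_1$ meets all of them and local finiteness fails. More fundamentally, Theorem~\ref{every_alexandroff_feebly_compact} asserts that $X$ is feebly compact. Then every locally finite family of open sets in $X$ is finite, so every subset of $X$, finite or not, is trivially feebly bounded. This directly contradicts the present statement for any non-trivial $X$, for instance $(\mathbb{Z},\leq)$ with $B=\{0\}$.

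Before committing to any construction, I would therefore first reread the definitions to see whether the intended meaning differs. One possibility is that the open sets in the family are required to be pairwise disjoint or to avoid some point. Another is that ``feebly bounded'' is meant relative to the opposite topology. A third is that the cited result \cite[Proposition 3.13]{BanakhRavsky2020} needs a separation hypothesis that fails for $T_0$ Alexandroff spaces, which would invalidate Theorem~\ref{every_alexandroff_feebly_compact}. If that citation is indeed inapplicable, the plan above remains the one to pursue. I would then look for a family of open sets that escape along the chain $x^{-n}$ while each still meets $U_1$, checking local finiteness with the hyperconnectedness from Theorem~\ref{every_paratopological_alexandroff_hyperconexosiiiiii} in mind. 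I would also note that hyperconnectedness makes any two non-empty open sets intersect, which is yet another obstruction to local finiteness.
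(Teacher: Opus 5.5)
Your diagnosis is correct, and it is the decisive point. With the definition of feebly bounded stated in the paper, the lemma is false, and $(\mathbb{Z},\le)$ with $B=\{0\}$ is a valid counterexample. You do not need Theorem~\ref{every_alexandroff_feebly_compact} for this: in any space a finite $B$ is feebly bounded. Indeed, for a locally finite $\mathcal U$, each $b\in B$ has a neighbourhood $V_b$ meeting only finitely many members. Every member meeting $B$ contains some $b$ and so meets $V_b$.

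This also makes your fallback plan a dead end, independently of feeble compactness: searching along $x^{-n}$ for an infinite locally finite family cannot succeed. Your suspicion about the Banakh--Ravsky route is nonetheless partly justified. Lemma~\ref{lem_2_pseudocompact} already fails for $(\mathbb{Z},\le)$: the decreasing sequence $V_n=U_{-n}$ gives $\bigcap_n V_n^{-1}=\bigcap_n F_n=\emptyset$, and its proof tacitly assumes $\bigcap_n V_n\neq\emptyset$. Feeble compactness of connected $X$ still holds, simply because hyperconnected spaces are feebly compact.

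The paper's proof works as follows. It picks $b$ minimal in $B$ and some $x<1$, which exists by connectedness and Lemma~\ref{lem_aux_x_en_u_1_entonces_inverso_f_1}. It puts $y=bx<b$, so $U_y\cap B=\emptyset$ by minimality. It then concludes from the locally finite family $\{U_y\}$ that $B$ is not feebly bounded. Under the stated definition this last step is a non sequitur, since $\{U_y\}$ has no member meeting $B$.

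The step is valid only if ``feebly bounded'' is read as ``every non-empty member of every locally finite open family meets $B$''. Taking one-member families, this is just density of $B$, i.e.\ the condition $B\cap U_x\neq\emptyset$ for all $x$ from Lemma~\ref{lem_b_feebly_bounded}. If that is the intended notion, the idea missing from your proposal is exactly this downward translation below a minimal point of $B$. With the definition as written, the statement cannot be proved, and your refutation is the correct response to it.
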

\begin{proof}
    Let us verify that we may choose \( y \in X \) such that \( U_y \cap B = \emptyset \). Suppose that \( b \in B \) is a minimal point. Now take any \( x < 1 \) and apply \( L_b \) to this relation. Since \( L_b \) is a homeomorphism, we obtain \( bx < b \). Let \( y = bx \). By construction, we have \( U_y \cap B = \emptyset \).
    
    Finally, consider the family \( \mathcal{U} = \{ U_y \} \). By hypothesis, \( \mathcal{U} \) is locally finite. Therefore, \( B \) is not feebly bounded.
\end{proof}

Note that the opposite of Lemma \ref{lem_feebly_bounded_finito_no} does not hold in general. Consider the space $\mathbb{Z}\oplus \mathbb{Z}$ from Example \ref{ex_z_z}. It is evident that $B=\{(x,y)~|~x,y\in \mathbb{Z}$ and $x,y\geq 0 \}$ is not feebly bounded because we can consider the locally finite family $\mathcal{U}=\left\{U_{(-1,-1)}\right\}$ that does not meet $B$. On the other hand, the set $B=\{(x,x)~|~x\in \mathbb{Z} \}$ is feebly bounded in $\mathbb{Z}\oplus \mathbb{Z}$.

\begin{lem}\label{lem_b_feebly_bounded}
    Let $X$ be a connected non-compact Alexandroff paratopological group. Then $B\subset X$ is feebly bounded if and only if $B\cap U_x\neq \emptyset$ for every $x\in X$. 
\end{lem}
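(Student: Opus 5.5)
Both directions rest on one elementary observation about Alexandroff spaces. In an Alexandroff space a family $\mathcal{U}$ of open sets is locally finite if and only if every minimal open set $U_z$ meets only finitely many members of $\mathcal{U}$. The reason is that $U_z$ is the smallest neighbourhood of $z$, so the condition at the point $z$ can always be tested on $U_z$.

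For the direction ($\Leftarrow$), assume $B\cap U_x\neq\emptyset$ for every $x\in X$, and let $\mathcal{U}$ be a locally finite family of open sets. Let $\mathcal{U}_B$ be the subfamily of members meeting $B$, and suppose for contradiction that $\mathcal{U}_B$ is infinite. Every $V\in\mathcal{U}$ is a lower set. So whenever $V$ meets $B$ at a point $b$, it contains $U_b$.

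I then need a single point $z$ such that $U_z$ meets infinitely many members of $\mathcal{U}_B$. The plan is to exploit hyperconnectedness (Theorem \ref{every_paratopological_alexandroff_hyperconexosiiiiii}) together with its order form: for any $x,y$ there is $w$ with $w<x$ and $w<y$, since $U_x\cap U_y\neq\emptyset$. Indeed, take any $V\in\mathcal{U}_B$ and a point $v\in V$. For every other $V'\in\mathcal{U}_B$, pick $v'\in V'$; there is then $w\le v,v'$, so $w\in V\cap V'$. This shows that any two members of $\mathcal{U}$ intersect, which is not quite enough on its own.

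The cleaner route is to fix any $z\in X$ and show that $U_z$ meets every nonempty open set $V$. Pick $v\in V$; hyperconnectedness gives some $w\le z$ with $w\le v$, so $w\in U_z\cap V$. Hence $U_z$ meets every member of $\mathcal{U}$, and local finiteness forces $\mathcal{U}$ itself to be finite. In particular $\mathcal{U}_B$ is finite, and the hypothesis on $B$ is actually not needed for this direction. That looks suspicious, because it would make every subset feebly bounded, contradicting Lemma \ref{lem_feebly_bounded_finito_no}. The conflict resolves only if singleton families count: in the paper's example, $\{U_y\}$ with $B\cap U_y=\emptyset$ shows $B$ is not feebly bounded. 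So the intended reading must be that feebly bounded means every locally finite family has all but finitely many members meeting $B$, or equivalently, in this setting, that every nonempty open set meets $B$. I will follow that reading, consistent with how Lemma \ref{lem_feebly_bounded_finito_no} and the $\mathbb{Z}\oplus\mathbb{Z}$ example use the definition.

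Under this reading the equivalence reduces to the fact that every nonempty open set $V$ contains some $U_x$, namely $U_x$ for any $x\in V$. For ($\Rightarrow$), if $B\cap U_x=\emptyset$ for some $x$, then the one-element family $\{U_x\}$ is trivially locally finite and witnesses failure, exactly as in Lemma \ref{lem_feebly_bounded_finito_no}. For ($\Leftarrow$), every member $V$ of any locally finite family contains some $U_x$ and therefore meets $B$.

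The main obstacle is precisely this reconciliation of the definition with the paper's usage. The hyperconnectedness argument above (each $U_z$ meets every nonempty open set, so every locally finite family is finite) is what shows that the literal ``at most finitely many meet $B$'' condition would be vacuous. I would state that argument explicitly to justify interpreting the definition as in Lemma \ref{lem_feebly_bounded_finito_no}, and then give the two short directions above.
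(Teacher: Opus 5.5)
Your diagnosis of the literal definition is correct, and it is more than an interpretive wrinkle: with feebly bounded defined as in the paper, the direction $(\Rightarrow)$ is false, so no proof of the lemma as written can exist.

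Your hyperconnectedness argument (each $U_z$ meets every nonempty open set) shows that every locally finite family of nonempty open sets in $X$ is finite. Hence every subset of $X$ is feebly bounded. Concretely, in $\mathbb{Z}$ from Example~\ref{ex_reales_orden_usual}, $B=\{0\}$ is feebly bounded while $B\cap U_{-1}=\emptyset$. Under this definition finite sets are feebly bounded in every space, so Lemma~\ref{lem_feebly_bounded_finito_no} fails as well.

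The paper's proof of $(\Rightarrow)$ has exactly the gap you sensed. From ``$\{U_x\}$ is locally finite and $B$ is feebly bounded'' nothing forces $B\cap U_x\neq\emptyset$, since a one-member family always has at most finitely many members meeting $B$. Its $(\Leftarrow)$ direction instead uses the literal definition: feeble compactness makes the conclusion automatic, and the hypothesis on $B$ is never used. Incidentally, your direct argument is a sounder route to feeble compactness in the connected case than the paper's, which passes through Lemma~\ref{lem_2_pseudocompact}. That proof assumes $\bigcap_n U_n\neq\emptyset$, and $U_n=(-\infty,-n]$ in $\mathbb{Z}$ shows that $\mathbb{Z}$ is not even 2-pseudocompact.

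The flaw in your proposal is the rescue. The reading ``all but finitely many members of every locally finite family meet $B$'' is \emph{not} equivalent here to ``every nonempty open set meets $B$''. Because every locally finite family is finite, that reading is satisfied by every subset, just like the literal one, and the one-member family $\{U_x\}$ can never witness its failure. Your $(\Rightarrow)$ step works only for the reading ``every nonempty member of every locally finite family of open sets meets $B$''. Applied to singleton families, this says precisely that $B$ is dense.

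Under that reading your two directions are correct, but the lemma collapses to the fact that a subset of any Alexandroff space is dense if and only if it meets every $U_x$. Connectedness, non-compactness, the group structure, hyperconnectedness and feeble compactness then play no role. The notion is also no longer the boundedness of the Arhangel'skii--Tkachenko problems the paper is targeting.

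You should therefore say plainly that you are proving a modified statement (density). Alternatively, prove what is actually true for the paper's definition, namely that every subset of such an $X$ is feebly bounded. Do not present the density version as an equivalent reading of the lemma.
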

\begin{proof}
Since $X$ is hyperconnected, then every family $\mathcal{U}=\{U_x \}$ is locally finite. Since $B$ is feebly bounded, we conclude that $B\cap U_x\neq \emptyset$.

    Suppose $B\cap U_x\neq \emptyset$ for every $x\in X$. By Theorem \ref{every_alexandroff_feebly_compact}, every locally finite family of open sets in $X$ is finite. From this, we derive the desired result.
\end{proof}
We now have all the necessary ingredients to address the question raised in \cite[Open Problem 6.10.1]{TArhangelskiiTkachenko2008} within our framework.
\begin{thm}\label{thm_problema_resuelto_producto_dos}
    Let $X_1$ and $X_2$ be connected non-compact Alexandroff paratopological groups. If $B_1$ and $B_2$ are feebly bounded subsets of $X_1$ and $X_2$ respectively, then $B_1\times B_2$ is a feebly bounded subset of $X_1\times X_2$.  
\end{thm}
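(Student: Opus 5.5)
The plan is to reduce everything to the characterization in Lemma~\ref{lem_b_feebly_bounded}, applied to the product group $X=X_1\times X_2$. First I would check that $X$ satisfies that lemma's hypotheses. $X$ is a paratopological group: multiplication is coordinatewise, hence order-preserving for the product order. The product of two Alexandroff $T_0$-spaces is again Alexandroff $T_0$. Its specialization order is the product order, so the minimal open sets are
\[
U_{(x_1,x_2)}=U_{x_1}\times U_{x_2}.
\]
Connectedness of $X$ follows from connectedness of the factors. For non-compactness, use the projection onto $X_1$: it is continuous and surjective, so a compact $X$ would force $X_1$ to be compact.

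With this in place, Lemma~\ref{lem_b_feebly_bounded} tells us that $B_1\times B_2$ is feebly bounded in $X$ if and only if $(B_1\times B_2)\cap U_{(x_1,x_2)}\neq\emptyset$ for every $(x_1,x_2)\in X$. We compute
\[
(B_1\times B_2)\cap (U_{x_1}\times U_{x_2})=(B_1\cap U_{x_1})\times (B_2\cap U_{x_2}).
\]
Applying Lemma~\ref{lem_b_feebly_bounded} in each factor $X_i$, both factors on the right are non-empty. Hence their product is non-empty, and the statement follows.

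The main point needing care is whether Lemma~\ref{lem_b_feebly_bounded} really applies to $X$ as a black box. Its proof uses hyperconnectedness of $X$ (Theorem~\ref{every_paratopological_alexandroff_hyperconexosiiiiii}) and feeble compactness of $X$ (Theorem~\ref{every_alexandroff_feebly_compact}). Both are guaranteed once $X$ is known to be a connected non-compact Alexandroff paratopological group. Independently, hyperconnectedness of $X$ can be read off from Remark~\ref{rem_product_hyperconnected}. I therefore expect the only real obstacle to be verifying that the product is again an Alexandroff space whose open sets are generated by $U_{x_1}\times U_{x_2}$. This holds for finite products, which is exactly why the argument is stated for two factors. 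For infinitely many factors, the Tychonoff product is no longer Alexandroff, and a direct argument with basic open sets would be needed instead.
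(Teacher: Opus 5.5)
Your argument is correct given the paper's earlier results, but it is organized differently from the paper's proof.

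The paper starts from an arbitrary locally finite family $\mathcal{U}$ in $X_1\times X_2$. It makes $\mathcal{U}$ finite via feeble compactness of the product (Theorem~\ref{every_alexandroff_feebly_compact}), projects each member to the factors, and applies Lemma~\ref{lem_b_feebly_bounded} there. You instead apply Lemma~\ref{lem_b_feebly_bounded} to the product group itself, using that its minimal open sets are the rectangles $U_{x_1}\times U_{x_2}$.

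Your route makes the decisive step rigorous. Knowing that $\pi_1(U)$ meets $B_1$ and $\pi_2(U)$ meets $B_2$ does not by itself give $U\cap(B_1\times B_2)\neq\emptyset$ for an open $U$. What actually closes the argument is that every nonempty open set of the product contains such a rectangle. You also verify that $X_1\times X_2$ is connected and non-compact, which the paper uses tacitly. And you correctly note that the argument is tied to finitely many factors, since an infinite Tychonoff product is no longer Alexandroff.

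The paper's route has a different advantage. Once the product is feebly compact, every locally finite family of open sets is finite. So under the definition of feebly bounded as literally stated, every subset of $X_1\times X_2$ is feebly bounded, and the first paragraph of the paper's proof already suffices.

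Your proof instead needs the ``only if'' half of Lemma~\ref{lem_b_feebly_bounded} in each factor, and that half does not fit the literal definition. In $\mathbb{Z}$, the set $\{0\}$ lies in only finitely many members of any locally finite open family, yet it misses $U_{-1}$. So your argument is valid exactly insofar as that lemma is taken as given, that is, with ``feebly bounded'' read as ``meets every $U_x$'' (equivalently, dense).
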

\begin{proof}
The product \(X_1 \times X_2\) is an Alexandroff paratopological group, which implies that it is feebly compact by Theorem \ref{every_alexandroff_feebly_compact}. Let \(\mathcal{U}\) be a locally finite family of open sets in \(X_1 \times X_2\). Since \(X_1 \times X_2\) is feebly compact, we have \(\mathcal{U} = \{ U_j \}_{j \in J}\) with \(J\) finite.

Let \(\pi_i : X_1 \times X_2 \to X_i\) denote the projection onto the \(i\)-th coordinate, for \(i = 1,2\). Then \(\pi_i(\mathcal{U})\) is a locally finite family of open sets in \(X_i\). By Lemma \ref{lem_b_feebly_bounded}, 
every open set in \(\pi_i(\mathcal{U})\) meets \(B_i\). Consequently, \(\mathcal{U}\) meets \(B_1 \times B_2\).
\end{proof}

The proof of Theorem \ref{thm_problema_resuelto_producto_dos} can be immediately and naturally adapted to the more general context of arbitrary products of non‑compact Alexandroff paratopological groups. This yields the following result:
\begin{thm}\label{thm_problema_resuelto_producto_general}
    Let $\{X_\alpha\}_{\alpha\in \Lambda}$ be a family of connected non-compact Alexandroff paratopological groups. If $\{B_\alpha \}_{\alpha\in \Lambda}$ is a family of feebly bounded subsets such that $B_\alpha\subset X_\alpha$ for each $\alpha$, then $\prod_{\alpha\in \Lambda} B_\alpha$ is a feebly bounded subset of $\prod_{\alpha\in \Lambda} X_\alpha$.  
\end{thm}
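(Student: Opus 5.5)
The plan is to reproduce the argument of Theorem~\ref{thm_problema_resuelto_producto_dos} with arbitrarily many factors. Throughout, $X=\prod_{\alpha\in\Lambda}X_\alpha$ carries the Tychonoff product topology. The key steps are the following.

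First I would check that $X$ is again a connected non-compact Alexandroff paratopological group, so that the earlier results apply.

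\begin{enumerate}
\item[(a)] Multiplication is coordinatewise, and its continuity follows from continuity in each factor.
\item[(b)] Non-compactness is inherited, since each projection is a continuous surjection onto a non-compact space.
\item[(c)] Connectedness holds because a product of connected spaces is connected.
\end{enumerate}

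The delicate point is that for infinite $\Lambda$ the Tychonoff product of Alexandroff spaces need not be Alexandroff. A basic open neighbourhood of a point $x$ has the form $\prod_{\alpha\in F}U_{x_\alpha}\times\prod_{\alpha\notin F}X_\alpha$ with $F$ finite, and the intersection over all finite $F$ is $\prod U_{x_\alpha}$, which is not open in general. So I cannot simply invoke Theorem~\ref{every_alexandroff_feebly_compact} or Lemma~\ref{lem_b_feebly_bounded} on $X$. This is the main obstacle.

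To get around it, I would avoid the Alexandroff property of $X$ altogether. The tools are Remark~\ref{rem_product_hyperconnected} and Theorem~\ref{every_paratopological_alexandroff_hyperconexosiiiiii}: each $X_\alpha$ is hyperconnected, so $X$ is hyperconnected.

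In a hyperconnected space, every non-empty open set is dense. Hence a locally finite family $\mathcal{U}$ of non-empty open sets must be finite. Indeed, pick any point and a neighbourhood $W$ of it meeting only finitely many members of $\mathcal{U}$; since $W$ meets every non-empty open set, $\mathcal{U}$ itself is finite. This recovers feeble compactness of $X$ directly, without Theorem~\ref{every_alexandroff_feebly_compact}, and shows that every locally finite family is finite.

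It then remains to show that every non-empty open $V\subseteq X$ meets $\prod B_\alpha$. I would shrink $V$ to a basic open set $\prod_{\alpha\in F}V_\alpha\times\prod_{\alpha\notin F}X_\alpha$ with $F$ finite. For $\alpha\in F$, the set $V_\alpha$ is non-empty and open in the Alexandroff space $X_\alpha$, so it contains some $U_{x_\alpha}$. By Lemma~\ref{lem_b_feebly_bounded}, $U_{x_\alpha}$ meets $B_\alpha$, giving a point $b_\alpha\in V_\alpha\cap B_\alpha$. For $\alpha\notin F$, I pick an arbitrary $b_\alpha\in B_\alpha$. Each $B_\alpha$ is non-empty, since a feebly bounded set must meet every $U_x$ by Lemma~\ref{lem_b_feebly_bounded}. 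The point $(b_\alpha)_\alpha$ then lies in $V\cap\prod B_\alpha$.

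Since every member of the finite family $\mathcal{U}$ meets $\prod B_\alpha$, and in any case only finitely many members can meet it, $\prod B_\alpha$ is feebly bounded. In the finite-product case, the Alexandroff structure can alternatively be used directly, as in the proof of Theorem~\ref{thm_problema_resuelto_producto_dos}.
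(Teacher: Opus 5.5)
Your argument is correct, but it takes a different route from the paper. The paper proves the general case only by saying that the two-factor proof adapts immediately. That proof asserts that the product is again an Alexandroff paratopological group, gets feeble compactness from Theorem~\ref{every_alexandroff_feebly_compact} (via 2-pseudocompactness and Banakh--Ravsky), and then uses projections together with Lemma~\ref{lem_b_feebly_bounded}. As you rightly observe, for infinite $\Lambda$ the Tychonoff product is not Alexandroff, so that adaptation does not go through verbatim.

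Your route works directly in the Tychonoff product. Each $X_\alpha$ is hyperconnected by Theorem~\ref{every_paratopological_alexandroff_hyperconexosiiiiii}, so $\prod_\alpha X_\alpha$ is hyperconnected by Remark~\ref{rem_product_hyperconnected}. In a hyperconnected space every locally finite family of non-empty open sets is finite. This also avoids Lemma~\ref{lem_2_pseudocompact}, whose proof tacitly assumes $\bigcap_n U_n\neq\emptyset$; in $\mathbb{Z}$ the sets $U_{-n}$ satisfy $U_{-n}^{-1}=F_n$ and $\bigcap_n F_n=\emptyset$. What the paper's route buys is staying inside the Alexandroff framework, which is legitimate for finitely many factors.

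Your step (2), shrinking to a basic open set, is the correct way to carry out what the paper's projection step intends. As written, that step only shows that the projections of a member meet the $B_i$, not that the member meets $B_1\times B_2$.

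Still, you should drop step (2). With the definition of feebly bounded as stated, once every locally finite family of non-empty open sets in the product is finite, every subset of the product is feebly bounded. Your own clause ``in any case only finitely many members can meet it'' already finishes the proof, and the hypothesis on the $B_\alpha$ is not even needed.

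Step (2) also rests on the ``only if'' direction of Lemma~\ref{lem_b_feebly_bounded}, which does not follow from that definition. In $\mathbb{Z}$ the set $\{0\}$ is feebly bounded, since a point lies in only finitely many members of a locally finite family, yet it misses $U_{-1}$. The set $\emptyset$ is feebly bounded too, so your claim that each $B_\alpha$ is non-empty is unjustified. It is harmless, though, since an empty product is trivially feebly bounded.
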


We can also give a positive answer to \cite[Open Problem~6.10.2]{TArhangelskiiTkachenko2008} in our setting. The problem asks the following: if $G$ is a paratopological group and $B$ is a bounded subset of $G$, must $B^{2}$ also be a bounded subset of $G$?

In fact, we obtain a stronger result by proving that, in general, $AB$ is a bounded subset of an Alexandroff paratopological group $X$ whenever $A$ and $B$ are bounded subsets of $X$.


\begin{thm}\label{problema_2_abierto_tachenko}
Let $X$ be a connected non-compact Alexandroff paratopological group. If $A$ and  $B$ are  feebly bounded subsets of $X$, then $AB$ is also a feebly bounded subset of $X$.
\end{thm}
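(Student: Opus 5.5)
The plan is to use the characterization of Lemma~\ref{lem_b_feebly_bounded}. Since $X$ is connected and non-compact, a subset $C\subseteq X$ is feebly bounded if and only if $C\cap U_x\neq\emptyset$ for every $x\in X$. So it suffices to fix an arbitrary $x\in X$ and produce $a\in A$, $b\in B$ with $ab\le x$. I will use the hypothesis on $A$ and $B$ only through this same lemma: $A$ meets every $U_y$ and $B$ meets every $U_y$.

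First I choose an element of $A$ below some conveniently chosen point, and then correct with an element of $B$ via a translation. Fix $y\in X$ (to be chosen), and pick $b\in B\cap U_y$, so $b\le y$. Next pick $a\in A\cap U_{xy^{-1}}$, so $a\le xy^{-1}$. By continuity of multiplication (equivalently, monotonicity in each variable, since $R_b$ and $L_{a}$ are order-preserving) we get
\[
ab\le ay\le xy^{-1}y=x,
\]
using $R_b$ applied to $a\le xy^{-1}$ to get $ab\le xy^{-1}b$, and $L_{xy^{-1}}$ applied to $b\le y$ to get $xy^{-1}b\le x$. Hence $ab\in U_x\cap AB$. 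Any choice of $y$ works, for instance $y=1$: take $b\in B\cap U_1$ and $a\in A\cap U_x$, and then $ab\le a\cdot 1\le x$ because $ab=L_a(b)\le L_a(1)=a$ and $a\le x$.

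Concluding, $AB\cap U_x\neq\emptyset$ for all $x$, so by Lemma~\ref{lem_b_feebly_bounded} (the converse direction, which rests on feeble compactness, Theorem~\ref{every_alexandroff_feebly_compact}) $AB$ is feebly bounded. The only step that needs care is making sure the monotonicity used is that of the translations $L_a$ and $R_b$, which holds in any Alexandroff paratopological group because translations are continuous. No inversion is needed, so the lack of continuity of $\mathrm{Inv}$ causes no obstruction, and I do not expect a real obstacle here. Taking $A=B$ answers the question for $B^2$.
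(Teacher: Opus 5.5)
Your argument is essentially the paper's proof. Both reduce, via Lemma~\ref{lem_b_feebly_bounded}, to finding an element of $AB$ below each $x$ using monotonicity of translations. The paper takes $a_x\le x$ and then $b\le a_x^{-1}x$. Your choice $b\le 1$, $a\le x$ is a slightly simpler variant, and taking $y=b$ shows that any single $b\in B$ suffices.

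One caveat applies equally to your proof and to the paper's. You use the direction of Lemma~\ref{lem_b_feebly_bounded} that gives $A\cap U_x\neq\emptyset$ and $B\cap U_1\neq\emptyset$, and this direction is false under the stated definition of feebly bounded. For example, the finite set $\{0\}\subset\mathbb{Z}$ is feebly bounded but misses $U_{-1}$.

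The statement itself still holds, for a trivial reason. $X$ is hyperconnected (Theorem~\ref{every_paratopological_alexandroff_hyperconexosiiiiii}), so every locally finite family of open sets in $X$ is finite. Hence every subset of $X$, in particular $AB$, is feebly bounded.
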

\begin{proof}
    By Lemma~\ref{lem_b_feebly_bounded}, for every \(x \in X\) we have \(U_x \cap A \neq \emptyset\). Hence, for each \(x \in X\) there exists \(a_x \in A\) such that \(a_x \leq x\).

The idea of the proof is to show that for every \(x \in X\) there exists an element \(c \in AB\) with \(c \leq x\), and then apply Lemma~\ref{lem_b_feebly_bounded}. Fix \(x \in X\) and choose \(a_x \in A\) as above. Consider the left translation \(L_{a_x^{-1}}\) and apply it to the inequality \(a_x \leq x\); this yields
\[
1 \leq a_x^{-1} x = y.
\]
By hypothesis, there exists \(b_y \in B\) such that
\[
b_y \leq y = a_x^{-1} x.
\]
Therefore,
\[
a_x b_y \leq x,
\]
and since \(a_x b_y \in AB\), the claim follows.
\end{proof}
As an immediate consequence of Theorem \ref{problema_2_abierto_tachenko}:
\begin{cor}
    Let $X$ be a connected non-compact Alexandroff paratopological group. If $B$ is a feebly bounded subset of $X$, then $B^2$ is also a feebly bounded subset of $X$.
\end{cor}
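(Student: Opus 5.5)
The corollary is the diagonal case of Theorem~\ref{problema_2_abierto_tachenko}. I will set $A = B$ there, so that $AB = B\cdot B = B^2$. The hypotheses match exactly. $X$ is a connected non-compact Alexandroff paratopological group, and both factors are the feebly bounded subset $B$. The theorem then directly yields that $B^2$ is feebly bounded in $X$.

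To keep the argument self-contained, I would also recall the mechanism through Lemma~\ref{lem_b_feebly_bounded}. It suffices to show $B^2 \cap U_x \neq \emptyset$ for every $x \in X$.

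\begin{itemize}
\item Fix $x \in X$. Since $B$ is feebly bounded, there is some $b \in B$ with $b \leq x$.
\item Translating by $L_{b^{-1}}$, which is continuous and hence order-preserving, gives $1 \leq b^{-1}x$.
\item Applying Lemma~\ref{lem_b_feebly_bounded} again to $B$, there is $b' \in B$ with $b' \leq b^{-1}x$.
\item Applying the order-preserving map $L_b$ gives $bb' \leq x$, so $bb' \in B^2 \cap U_x$.
\end{itemize}

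There is essentially no obstacle. The only point worth a word is that $B^2$ here means the product set $\{bb' : b,b' \in B\}$, not the Cartesian square. That reading is exactly $AB$ with $A=B$, consistent with \cite[Open Problem~6.10.2]{TArhangelskiiTkachenko2008}.
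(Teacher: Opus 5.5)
Your reduction, setting $A=B$ in Theorem~\ref{problema_2_abierto_tachenko} so that $AB=B^2$, is exactly how the paper obtains this corollary. As a deduction from that theorem it is fine. The self-contained unpacking, however, has a step that fails. Your first bullet uses the forward direction of Lemma~\ref{lem_b_feebly_bounded}: if $B$ is feebly bounded, then $B\cap U_x\neq\emptyset$ for every $x$. That direction is false under the definition of feebly bounded used in the paper. Take $X=(\mathbb{Z},\leq)$ from Example~\ref{ex_reales_orden_usual} and $B=\{0\}$. Every locally finite family of open sets in $X$ is finite (see below), so $B$ is feebly bounded, yet $B\cap U_{-1}=\emptyset$. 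The paper justifies this direction by noting that the one-member family $\{U_x\}$ is locally finite. That gives nothing, because a one-member family trivially has at most finitely many members meeting $B$. So your bullet argument, which reproduces the paper's proof of Theorem~\ref{problema_2_abierto_tachenko}, rests on an invalid step, even though its conclusion is true.

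A correct self-contained proof is shorter and does not even use the hypothesis on $B$. By Theorem~\ref{every_paratopological_alexandroff_hyperconexosiiiiii}, $X$ is hyperconnected. Let $\mathcal{U}$ be a locally finite family of open sets. Choose $p\in X$ and an open neighbourhood $V$ of $p$ meeting only finitely many members of $\mathcal{U}$. By hyperconnectedness, every non-empty member of $\mathcal{U}$ meets $V$, so $\mathcal{U}$ is finite. Hence every subset of $X$, in particular $B^2$, is feebly bounded. Derive this finiteness directly as above rather than citing Theorem~\ref{every_alexandroff_feebly_compact}. That theorem's proof goes through Lemma~\ref{lem_2_pseudocompact}, which already fails for $(\mathbb{Z},\leq)$: the decreasing open sets $U_{-n}$ have inverses $F_n$, and these have empty intersection.
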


\subsection{Embeddings of non-compact Alexandroff paratopological groups}

One problem that has been widely studied in the literature on both  topological groups and paratopological groups is the characterization of embeddings of such groups as closed subgroups of products of topological or paratopological groups satisfying additional structural  properties. See, for example, \cite{tachenko2009embedding} and the references therein. In \cite{sanchez2013paratopologicos}, this problem is solved for \(T_0\) paratopological groups. In this section, we adapt the statements from that thesis to our framework, simplifying both the formulation of the main theorem and several classical definitions.

\begin{df} Let $X$ be a paratopological group with identity $1$. A family $\gamma$ of open neighborhoods of $1$ is \emph{subordinated} to an open neighborhood $U$ of $1$ if for every $x\in X$, there exists $V\in \gamma$ such that $xVx^{-1}\subseteq U$.
\end{df}

If \(X\) is a non-compact Alexandroff paratopological group and we consider \(U_1\), the above condition can be written simply as \(x v x^{-1} \leq 1\) for every \(v \in V\).

\begin{df}\label{df_w_balanced}
    Let $X$ be a paratopological group. We say that $X$ is \emph{$\omega$-balanced} if for every open neighborhood $U$ of $1$, there exists a countable family $\gamma$ of open neighborhoods of $1$  subordinated to $U$. 
\end{df}

In our framework, Definition \ref{df_w_balanced} can be simplified as follows:

\begin{lem}\label{lem_char_w_balanced}
    Let $X$ be non-compact Alexandroff paratopological group. Then $X$ is $\omega$-balanced if and only if  there exists a countable family $\gamma$ of open neighborhoods of $1$ subordinated to $U_1$.
\end{lem}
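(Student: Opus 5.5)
The plan is to prove the two implications separately, using throughout that in an Alexandroff space $U_1$ is the smallest open neighborhood of $1$.

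\emph{Necessity.} Suppose $X$ is $\omega$-balanced. Definition~\ref{df_w_balanced} applies to every open neighborhood $U$ of $1$. Taking $U=U_1$ gives at once a countable family $\gamma$ of open neighborhoods of $1$ subordinated to $U_1$. This direction is immediate.

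\emph{Sufficiency.} Let $\gamma$ be a countable family of open neighborhoods of $1$ subordinated to $U_1$, and let $U$ be any open neighborhood of $1$. Since $U$ is open, it is a lower set containing $1$, so $U_1=\{y\mid y\le 1\}\subseteq U$. Now fix $x\in X$. By hypothesis there is $V\in\gamma$ with $xVx^{-1}\subseteq U_1$, and hence $xVx^{-1}\subseteq U$. So the same countable family $\gamma$ is subordinated to $U$. As $U$ was arbitrary, $X$ is $\omega$-balanced.

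No step is a real obstacle; the only point to check is that $U_1$ is the minimal open neighborhood of $1$. This is exactly where the Alexandroff property enters: the intersection of all open sets containing $1$ is open. One could also remark that in this case $\gamma$ may be replaced by a family of sets $U_{v}$, but this is not needed. The non-compactness hypothesis plays no role beyond the standing conventions of the section.
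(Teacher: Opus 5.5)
Your proof is correct and follows the paper's argument: the forward direction is the special case $U=U_1$, and the converse uses that $U_1$ is contained in every open neighborhood $U$ of $1$, so the same countable $\gamma$ gives $xVx^{-1}\subseteq U_1\subseteq U$. You are also right that the non-compactness hypothesis plays no role in this argument.
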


\begin{proof}
    Let us suppose that there exists a countable family $\gamma$ of open neighborhoods of $1$ subordinated to $U_1$. Recall that for every open neighborhood $U$ of $1$, one has that $U_1\subseteq U$. Hence, for every open neighborhood $U$ of $1$ consider $\gamma$. Clearly, for every $x\in X$, there exists $V\in \gamma$ such that $xVx^{-1}\subseteq U_1\subseteq U$, as desired. The other implication follows trivially.
\end{proof}

This characterization simplifies the statement of \cite[Theorem 2.2.8]{sanchez2013paratopologicos} in our setting as follows:
\begin{thm}\label{thm_encaje_1}
    Let $X$ be a non-compact Alexandroff paratopological group. Then $X$ is topologically isomorphic to a subgroup of a product of first countable paratopological groups $T_0$ if and  only if there exists a countable family $\gamma$ of open neighborhoods of $1$ subordinated to $U_1$.
\end{thm}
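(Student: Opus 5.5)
The plan is to deduce the statement directly from \cite[Theorem 2.2.8]{sanchez2013paratopologicos}, combined with Lemma~\ref{lem_char_w_balanced}. That theorem says a paratopological group is topologically isomorphic to a subgroup of a product of first countable $T_0$ paratopological groups if and only if it is $T_0$ and $\omega$-balanced. Lemma~\ref{lem_char_w_balanced} shows that, for a non-compact Alexandroff paratopological group, being $\omega$-balanced is equivalent to the existence of a countable family $\gamma$ of open neighborhoods of $1$ subordinated to $U_1$. The proof therefore reduces to checking the hypotheses of the cited theorem and then chaining the two equivalences.

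\emph{Step 1.} We verify the standing hypotheses. By the convention of Section~\ref{sec_intro}, every Alexandroff space considered here is $T_0$, so $X$ is a $T_0$ paratopological group. This is exactly the class to which \cite[Theorem 2.2.8]{sanchez2013paratopologicos} applies.

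\emph{Step 2.} We prove ``$\Leftarrow$''. Suppose a countable family $\gamma$ subordinated to $U_1$ exists. Lemma~\ref{lem_char_w_balanced} gives that $X$ is $\omega$-balanced. The sufficiency direction of \cite[Theorem 2.2.8]{sanchez2013paratopologicos} then embeds $X$ as a subgroup of a product of first countable $T_0$ paratopological groups.

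\emph{Step 3.} We prove ``$\Rightarrow$''. Suppose such an embedding exists. The necessity direction of the cited theorem gives that $X$ is $\omega$-balanced. Applying Definition~\ref{df_w_balanced} to the particular open neighborhood $U=U_1$ of $1$ yields a countable family $\gamma$ subordinated to $U_1$; this is also the trivial direction of Lemma~\ref{lem_char_w_balanced}.

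The only step needing care is confirming that the cited theorem is stated for exactly the class we use. If it requires $\omega$-balancedness together with an additional condition, such as the ``Hausdorff number'' or ``$\omega$-narrow'' style hypotheses appearing in \cite{tachenko2009embedding}, I would show that this condition holds automatically here. The first thing I would try is to exploit the fact that $U_1$ is the minimal neighborhood of $1$. For instance, every neighborhood $V$ of $1$ contains $U_1$, and $U_1U_1\subseteq U_1$ since multiplication is order-preserving. Hence, in any condition of the form ``for each neighborhood $U$ there is a neighborhood $V$ with $V^{-1}V\subseteq U\cdots$'' or ``countable index'', one may simply take $V=U_1$, which makes the condition trivially true. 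Granting this, the rest of the argument is immediate bookkeeping.
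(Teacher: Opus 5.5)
Your proposal is correct and follows the paper's route exactly: the paper gives no argument beyond combining Lemma~\ref{lem_char_w_balanced} with \cite[Theorem 2.2.8]{sanchez2013paratopologicos} for $T_0$ paratopological groups, which is what your Steps 1--3 do. Your closing contingency is not needed, and its claim that $V=U_1$ trivially satisfies conditions like $V^{-1}V\subseteq U$ fails unless $X$ is discrete (since $U_1^{-1}U_1=F_1U_1\supseteq U_1\cup F_1$); in fact your worry about the exact form of the citation disappears entirely, because both sides of the equivalence always hold: $X$ is itself a first countable $T_0$ paratopological group (each $x$ has the one-element local base $\{U_x\}$), and $\{U_1\}$ is subordinated to $U_1$ since $v\le 1$ gives $xvx^{-1}\le xx^{-1}=1$.
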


\begin{df}\label{df_w_narrow}
    A paratopological group $X$ is called \emph{$w$-narrow } if for every neighborhood $U$ of the identity element $1$, there exists a  countable set $A \subseteq G$ such that
\[
    G = A\,U.
\]
\end{df}

As before, we can characterize Definition \ref{df_w_narrow} easily:

\begin{lem}\label{lem_char_w_narrow}
    Let $X$ be a non-compact Alexandroff paratopological group. Then $X$ is $\omega$-narrow if and only if there exists a countable set $A\subseteq X$ such that $X=A\,U_1$.
\end{lem}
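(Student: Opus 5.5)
The plan mirrors the proof of Lemma~\ref{lem_char_w_balanced}. The key fact is that in an Alexandroff space $U_1$ is the smallest open neighbourhood of $1$: it is the intersection of all open sets containing $1$, and it is open. Hence every open neighbourhood $U$ of $1$ satisfies $U_1\subseteq U$.

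\emph{Forward implication.} Suppose $X$ is $\omega$-narrow. Applying Definition~\ref{df_w_narrow} to the particular neighbourhood $U=U_1$ immediately gives a countable set $A\subseteq X$ with $X=A\,U_1$. Nothing else is needed here.

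\emph{Reverse implication.} Suppose there is a countable $A\subseteq X$ with $X=A\,U_1$. Let $U$ be an arbitrary open neighbourhood of $1$. Since $U_1\subseteq U$, we get
\[
X=A\,U_1\subseteq A\,U\subseteq X,
\]
so $X=A\,U$ with the same countable $A$. Thus one countable set works uniformly for all neighbourhoods of $1$, and $X$ is $\omega$-narrow.

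The only care needed is to read Definition~\ref{df_w_narrow} with $G$ meaning $X$, and to note that the definition quantifies over all neighbourhoods of $1$, not just open ones. A neighbourhood of $1$ still contains an open set containing $1$, hence contains $U_1$, so the same inclusion argument applies. I expect no genuine obstacle. Non-compactness is not used; it only keeps the statement consistent with the paper's standing setting, where by Theorem~\ref{thm_no_existen_finitos_paratopologicos_grupos} the compact case is discrete or trivial anyway.
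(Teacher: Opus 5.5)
Your proposal is correct and is essentially the paper's proof: the reverse direction uses that $U_1$ is the smallest open neighbourhood of $1$, so $X=A\,U_1\subseteq A\,U\subseteq X$ for every neighbourhood $U$ of $1$, and the forward direction is the trivial specialization $U=U_1$. Your two extra remarks are accurate small refinements of the same argument: arbitrary (not just open) neighbourhoods of $1$ also contain $U_1$, and non-compactness is never used.
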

\begin{proof}
    Let $A$ be a countable subset of $X$ such that $X=A\,U_1$. Recall that $U_1$ is the smallest open neighborhood containing $1$ and also satisfies that $U_1\subseteq U$ for every open neighborhood $U$ of $1$. Hence, $X=A\, U$ for every open neighborhood $U$ of $1$. The other implication follows trivially.
\end{proof}

\begin{df}\label{df_totally_narrow}
    A paratopological group $X$ is totally \emph{$\omega$-narrow} if the associated topological group $X^*$ is $\omega$-narrow.
\end{df}

\begin{lem}\label{lem_char_totally_narrow}
    Let $X$ be a non-compact Alexandroff paratopological group $X$. Then $X$ is totally $\omega$-narrow if and only if $X$ is countable.
\end{lem}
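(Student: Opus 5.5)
The plan is to identify the associated topological group $X^*$ explicitly and then reduce $\omega$-narrowness of $X^*$ to countability of $X$. Recall that $X^*=(X,\tau\vee\tau^{-1})$, where $\tau$ is the Alexandroff topology of $X$.

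First I will show that $X^*$ is discrete. By Lemma~\ref{lem_inversas_cerrados_en_abiertos}, or directly by Lemma~\ref{lem_opuesto_u_1} together with Theorem~\ref{thm_strong_local_homogeneuos}, the set $(U_1)^{-1}=F_1$ is open in $\tau^{-1}$. Hence
\[
U_1\cap F_1=\{y\in X : y\le 1\text{ and } y\ge 1\}=\{1\}
\]
is open in $\tau\vee\tau^{-1}$, by antisymmetry of the partial order. By homogeneity of the topological group $X^*$, every singleton $\{x\}=xU_1\cap xF_1=U_x\cap F_x$ is open, so $X^*$ is discrete. This is the remark already made in the paper right after the definition of $G^*$; I will write it out in this short form.

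Next, with $X^*$ discrete, I apply Definition~\ref{df_w_narrow} to $X^*$. The singleton $\{1\}$ is itself a neighbourhood of the identity in $X^*$. So if $X^*$ is $\omega$-narrow, there is a countable $A$ with
\[
X=A\{1\}=A,
\]
and hence $X$ is countable. Conversely, if $X$ is countable, take $A=X$. Then for every neighbourhood $U$ of $1$ we have $1\in U$, so $X=X\cdot\{1\}\subseteq AU$, which gives $\omega$-narrowness of $X^*$ and hence total $\omega$-narrowness of $X$ by Definition~\ref{df_totally_narrow}.

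The only point needing care is the first step, namely that $F_1$ genuinely belongs to $\tau^{-1}$. This holds because $\tau^{-1}=\{U^{-1}:U\in\tau\}$ and $U_1^{-1}=F_1$ by Lemma~\ref{lem_opuesto_u_1}. The non-compactness hypothesis is used only to invoke these earlier lemmas; everything else is immediate.
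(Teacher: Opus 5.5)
Your proposal is correct and follows the paper's proof: discreteness of $X^*$ lets you take $\{1\}$ as a neighbourhood of the identity, which forces $X=A\{1\}=A$ to be countable, and the converse is trivial. The only difference is that you verify why $X^*$ is discrete, via $U_x\cap F_x=\{x\}$ with $F_x=(U_{x^{-1}})^{-1}\in\tau^{-1}$, whereas the paper just recalls this fact.
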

\begin{proof}
    If $X$ is countable, the result holds trivially. Let us prove the other implication. Recall that if $X$ is a non-compact Alexandroff paratopological group, then $X^*$ has the discrete topology. This means that we can consider $\{1\}$ as an open neighborhood of $1$. By hypothesis, there exists a countable set $A\subseteq X$ such that $X=A\, \{1\}=A$.
\end{proof}
From this characterization and \cite[Theorem 2.2.10]{sanchez2013paratopologicos}, the following result holds trivially.
\begin{thm}\label{thm_encaje_2}
    Let $X$ be a non-compact Alexandroff paratopological group. Then $X$ is topologically isomorphic to a subgroup of a product of second countable paratopological groups $T_0$ if and only if $X$ is countable.
\end{thm}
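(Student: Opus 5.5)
The plan is to deduce Theorem~\ref{thm_encaje_2} directly from \cite[Theorem 2.2.10]{sanchez2013paratopologicos}. In its $T_0$ form, that theorem says a paratopological group is topologically isomorphic to a subgroup of a product of second countable $T_0$ paratopological groups if and only if it is totally $\omega$-narrow and $\omega$-balanced. Hypotheses of this shape appear in the paper's source as conditions on $X^*$ and on the conjugation behaviour of neighbourhoods of $1$. Our task is therefore to show that, for a non-compact Alexandroff paratopological group $X$, the conjunction of these conditions reduces to ``$X$ is countable''.

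The first step is to invoke Lemma~\ref{lem_char_totally_narrow}, which gives that total $\omega$-narrowness is equivalent to countability of $X$. This already yields the forward implication: if $X$ embeds as stated, then $X$ is totally $\omega$-narrow, hence countable.

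For the converse, assume $X$ is countable. We must also show that $X$ is $\omega$-balanced, and we use Lemma~\ref{lem_char_w_balanced}. It suffices to exhibit a countable family $\gamma$ of open neighbourhoods of $1$ subordinated to $U_1$. For each $x\in X$, consider $V_x=x^{-1}U_1x$. Since $L_{x^{-1}}$ and $R_x$ are homeomorphisms, $V_x=L_{x^{-1}}R_x(U_1)$ is open, and it contains $x^{-1}1x=1$. Moreover $xV_xx^{-1}=U_1$. Because $X$ is countable, the family $\gamma=\{V_x : x\in X\}$ is countable and subordinated to $U_1$. Hence $X$ is $\omega$-balanced and totally $\omega$-narrow, and the cited theorem provides the embedding.

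The main obstacle is making sure the hypotheses of \cite[Theorem 2.2.10]{sanchez2013paratopologicos} are exactly those reduced above. If that theorem also requires $\omega$-narrowness of $X$ itself, this follows from countability via Lemma~\ref{lem_char_w_narrow} with $A=X$. If it requires some other $T_0$-type regularity condition, the plan is to check it through the minimal neighbourhood $U_1$, in the same way Lemma~\ref{lem_char_w_balanced} handles $\omega$-balancedness. The key structural fact in every such check is that $U_1$ is contained in every open neighbourhood of $1$, so each ``for every neighbourhood $U$'' condition only needs to be verified for $U=U_1$.
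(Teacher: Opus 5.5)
Your proposal is correct and follows the paper's route: the paper derives the theorem directly from Lemma~\ref{lem_char_totally_narrow} (total $\omega$-narrowness is equivalent to countability) together with \cite[Theorem 2.2.10]{sanchez2013paratopologicos}, which it applies as a characterization by total $\omega$-narrowness alone. Your extra verification that a countable $X$ is $\omega$-balanced, using $V_x=x^{-1}U_1x$ and Lemma~\ref{lem_char_w_balanced}, is valid; it only serves as a safeguard in case the cited theorem also requires $\omega$-balancedness, and the paper does not use it.
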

\begin{rem}
    Note that neither of the products from Theorems \ref{thm_encaje_1} and \ref{thm_encaje_2} can be realized as products of non‑compact Alexandroff paratopological groups, by Theorem \ref{thm_no_subgrupos_cerrados_abiertos}.
\end{rem}

\section{The combinatorics of Alexandroff paratopological groups}\label{sec_combinatorics}
We adapt and investigate several well‑known properties of partially ordered sets within this framework. This approach leads to a classification of a family of non‑compact Alexandroff paratopological spaces.

\begin{prop}\label{cor_ht_no_acotado}
    Let $X$ be a non-compact Alexandroff paratopological group. Then $\textnormal{ht}(X)=\infty$. 
\end{prop}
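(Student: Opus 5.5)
The plan is to build an infinite strictly increasing chain through the identity by iterating a left translation. Since $\mathrm{ht}(X)$ is defined through lengths of chains, such a chain immediately gives $\mathrm{ht}(X)=\infty$.

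\textbf{Caveat on the hypothesis.} The statement as worded requires that $X$ have at least one strict comparability $x<y$. An infinite discrete group, e.g.\ $\mathbb{Z}$ with the trivial order, is non-compact yet has height $0$. So I read ``non-compact Alexandroff paratopological group'' in the sense the paper adopts after Theorem~\ref{thm_no_existen_finitos_paratopologicos_grupos}: the non-trivial case, where the order is not the discrete one. The compact case is excluded by that theorem, which forces compact groups to be discrete.

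\textbf{Step 1: a strict element above the identity.} Fix $x<y$ in $X$. The left translation $L_{x^{-1}}$ is a homeomorphism with inverse $L_x$, hence an order isomorphism, so it preserves strict inequalities. Applying it gives $1<x^{-1}y=:g$. Thus $g\in F_1\setminus\{1\}$.

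\textbf{Step 2: an infinite chain.} Applying $L_g$ repeatedly to $1<g$ yields
\[
1<g<g^2<\cdots<g^n<\cdots .
\]
These inequalities are strict because each $L_g$ is an order isomorphism, so the powers $g^n$ are pairwise distinct. Alternatively, distinctness follows from Theorem~\ref{prop_no_hay_orden_finito_entornos_neutro}: $g\in F_1\setminus\{1\}$ has infinite order. By Lemma~\ref{lem_aux_x_en_u_1_entonces_inverso_f_1} the chain also extends downward, $\cdots<g^{-2}<g^{-1}<1$, which is not needed for the height bound but shows $U_1$ has infinite height as well.

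\textbf{Step 3: conclusion.} $X$ contains chains of every finite length, so $\mathrm{ht}(X)=\infty$; equivalently, every point has infinite height in $F_x$ and $U_x$ by Theorem~\ref{thm_strong_local_homogeneuos}.

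\textbf{Main obstacle.} The only delicate point is the first step: guaranteeing a strict comparability. Everything after that is forced by the translations being order automorphisms. If one insists on using only non-compactness, the natural route is the argument of Theorem~\ref{thm_no_existen_finitos_paratopologicos_grupos}(1) run in reverse: a non-discrete $X$ has a nontrivial connected component, and bounded height with finitely many maximal points would contradict $x<L_x(x)$. However, genuinely discrete infinite groups must be excluded by the convention above.
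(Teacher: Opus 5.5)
Your argument is correct and is essentially the paper's proof. The paper takes a chain $x_0<x_1<\cdots<x_n$ of maximal length and extends it past $x_n$ using the homogeneity $F_{x_0}\cong F_{x_n}$, which is a left translation; you instead iterate $L_g$ directly to get the infinite chain $1<g<g^2<\cdots$, the same mechanism made explicit. Your caveat is also well taken: the paper's extension step silently needs $n\geq 1$, so infinite groups with the discrete topology (non-compact, yet of height $0$) must be excluded exactly as you do.
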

\begin{proof}
    We argue by contradiction. Suppose $\textnormal{ht}(X)=n$, which gives that there exists a chain $x_0<x_1<...<x_n$. On the other hand, as an immediate consequence of Theorem \ref{thm_strong_local_homogeneuos}: $F_{x_0}$ is homeomorphic to $F_{x_n}$. Thus, we can extend the chain $x_0<x_1<...<x_n$ with elements from $F_{x_n}$ and this gives the contradiction.
\end{proof}

Unlike with the height, we may have finite width as we demonstrate with the following non-trivial example.

\begin{ex}\label{ex_ancho_2}
    Let $X=\{x_i,y_j \}_{i,j\in \mathbb{Z}}$ with $x_i,y_j< x_s,y_k$ if and only if $i,j<k,s$. It is evident that $X$ is a partially ordered set with this relation (see Figure \ref{fig_ancho_2} for the Hasse diagram of $X$). For every $i\in \mathbb{Z}$, define $x_i:X\rightarrow X$ by $x_i(x_j)=x_{j+i}$ and $x_i(y_j)=y_{j+i}$, and define $y_i:X\rightarrow X$ by $y_i(x_j)=y_{j+i}$ and $y_{i}(y_j)=x_{j+i}$. Let us verify that $x_i$ and $y_i$ are continuous maps. We first prove that $x_i$ is continuous by studying cases. 
    \begin{itemize}
    \item[(i)] $x_j<x_s$, then $j<s$ and we obtain $x_i(x_j)=x_{j+i}<x_{s+i}=x_i(x_s)$. 
    \item[(ii)] $x_j<y_k$, then $j<k$ and, therefore, $x_i(x_j)=x_{j+i}<y_{k+i}=x_i(y_k)$ since $j+i<k+i$. 
    \item[(iii)] $y_{j}<x_s$ implies $j<s$ and we deduce $x_i(y_j)=y_{j+i}<x_{s+i}=x_i(x_s)$. 
    \item[(iv)] $y_j<y_k$ gives $j<k$, then $x_i(y_j)=y_{j+i}<y_{k+i}=x_i(y_k)$. 
    \end{itemize}
    Let us prove that $y_j$ is also continuous.
    \begin{itemize}
        \item[(i)] $x_i<y_k$, then $i<k$ and we deduce $y_j(x_i)=y_{i+j}<x_{k+j}=y_j(y_k)$. 
        \item[(ii)] $x_i<x_s$ means $i<s$ and, consequently, $y_j(x_i)=y_{i+j}<y_{s+j}=y_j(x_s)$. 
        \item[(iii)] $y_i<y_k$ gives $i<k$ and we obtain $y_j(y_i)=x_{i+j}<x_{k+j}=y_j(y_k)$. \item[(iv)] $y_i<x_s$ implies $i<s$ and we deduce $y_j(y_i)=x_{i+j}<y_{s+j}=y_j(s_s)$.
        \end{itemize}
        
        The maps $x_i$ and $y_j$ are indeed homeomorphism. This provides the structure of group in $X$, where we consider the composition of maps as the group operation. It is simple to deduce that the map $m:X\times X\rightarrow X$ given by $m(a,c)=a\cdot c$ is continuous. Thus, $X$ is a non-compact Alexandroff paratopological group.
 
    Moreover, $X$ satisfies $\textnormal{width}(X)=2$. Note that $U_{x_i}\cup U_{y_i}$ (resp., $F_{x_i}\cup F_{y_i}$) is a finite model of the infinite dimensional sphere for every $i\in \mathbb{Z}$. Additionally, it is not difficult to verify that  $X$ is contractible.

    \begin{figure}[ht]
        \centering
        \includegraphics[width=0.25\linewidth]{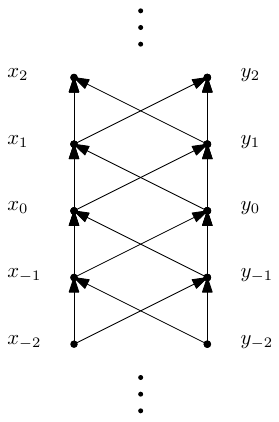}
        \caption{Hasse diagram of X from Example \ref{ex_ancho_2}.}
        \label{fig_ancho_2}
    \end{figure}
\end{ex}

\begin{df}\label{df_radius}
    Let $X$ be a non-compact Alexandroff paratopological group. We define the upper ball of minimal radius of $x\in X$, denoted by $r(x)$, as $r(x)=\{y\in X~|~ x\prec y \}$. Moreover, we define the radius of $X$ to be $|r(1)|$, and we denote it by $r(X)$.
\end{df}
Clearly, the radius is a topological invariant and $|r(x)|=|r(1)|$ for every $x\in X$ by Theorem \ref{thm_strong_local_homogeneuos}. However, this invariant is far from classifying the family of Alexandroff paratopological groups. In Example \ref{ex_ancho_2}, we conclude that the radius is 2. On the other hand, in Example \ref{ex_reales_orden_usual}, the radius is equal to zero since there is no point $x\in \mathbb{R}$ such that $0\prec x$.  By the definition of radius, the following two lemmas follow easily:
\begin{lem}\label{lem_radius_menor_width}
    Let $X$ be a non-compact Alexandroff paratopological group. Then $r(X)\leq \textnormal{width}(X)$.
\end{lem}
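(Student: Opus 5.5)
The plan is to show directly that $r(1)$ itself is an antichain in $X$. Since $r(X)=|r(1)|$ by Definition~\ref{df_radius}, and $\textnormal{width}(X)$ is the cardinality of a maximal antichain, this is enough provided widths compare as expected: every antichain extends, by Zorn's lemma, to a maximal antichain. I will also read $\textnormal{width}(X)$ as the supremum of cardinalities of antichains, which is the only reading under which the width is an invariant. With that reading, once $r(1)$ is an antichain, extending it to a maximal antichain $A$ gives $r(X)=|r(1)|\le |A|\le \textnormal{width}(X)$.

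To see that $r(1)$ is an antichain, take $y,z\in r(1)$ with $y\neq z$, and suppose for contradiction that they are comparable, say $y<z$. By definition of $r(1)$ we have $1\prec z$, which means there is no element strictly between $1$ and $z$. On the other hand, $1\prec y$ gives $1<y$, so $1<y<z$, and this contradicts $1\prec z$. Hence distinct elements of $r(1)$ are incomparable.

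The step most likely to cause friction is the definitional one, namely whether width means the size of one particular maximal antichain or the supremum over all antichains. With the first reading the statement could fail in a general poset, so I would state explicitly that width is taken as the supremum (equivalently, the maximum cardinality of an antichain). If needed, I would remark that by Theorem~\ref{thm_strong_local_homogeneuos} the same argument applies to $r(x)$ for every $x$. No group-theoretic input is needed beyond the definition of the radius at $1$.
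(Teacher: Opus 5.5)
Your proof is correct and is the argument the paper leaves implicit (it only says the lemma follows easily from the definition of the radius): the covers of $1$ form an antichain, since $1<y<z$ would contradict $1\prec z$, and so $|r(1)|\le \textnormal{width}(X)$. Reading $\textnormal{width}(X)$ as the supremum of cardinalities of antichains is the intended meaning, and under that reading your Zorn extension step is harmless but unnecessary.
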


\begin{lem}
    Let $X$ be a non-compact Alexandroff paratopological group. If $r(X)=1$, then $X$ is a disjoint union of $\textnormal{width}(X)$ non-finite chains. Particularly, if $X$ is connected, then $X$ is a non-finite chain.
\end{lem}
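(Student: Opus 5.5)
The plan is to show that every point lies on a two-sided infinite chain in which consecutive elements are covering pairs, and that these chains cannot meet or be compared across each other. Write $r(1)=\{a\}$, so $a$ is the unique cover of $1$. Applying the homeomorphism $L_x$ (Theorem~\ref{thm_strong_local_homogeneuos}), the unique cover of any $x$ is $xa$. By Lemma~\ref{lem_opuesto_u_1}, $U_1^{op}=F_1$, and transporting by $L_x$, $U_x^{op}\cong F_x$. Hence each $x$ is also covered from below by exactly one element, namely $xa^{-1}$. Indeed, $y\prec 1$ gives $1\prec y^{-1}$ via $R$ and $L$ translations, so $y^{-1}=a$.

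Next, I will show that the set $C_x=\{xa^n : n\in\mathbb{Z}\}$ is a chain $\cdots\prec xa^{-1}\prec x\prec xa\prec\cdots$. The elements are distinct, because $a$ has infinite order by Theorem~\ref{prop_no_hay_orden_finito_entornos_neutro}. The chain is non-finite in both directions.

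The main step is to prove that every $y>x$ lies in $C_x$. For this, I would show $F_x=\{xa^n : n\geq 0\}$. Since $X$ has infinite height and no finiteness is assumed, a priori $y>x$ need not be reachable by a finite sequence of covers. I would first try to argue that $F_1$ is itself a chain. Take $y>1$. By Corollary~\ref{lem_auxiliar_probar_hyperconexos}-type arguments together with homogeneity, $U_y\cap F_1$ is an interval whose bottom element $1$ has the unique cover $a$. I would like to conclude $a\leq y$.

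If local finiteness is not available, I would use the group structure instead. Set $z=y$ with $1<z$ and consider the element $a^{-1}z$. If $a\not\leq z$, then $1<z$ with no cover of $1$ below $z$, so $(1,z)$ has no minimal elements. Pulling this back through the translations gives a dense interval. I expect this to be the delicate point, since it seems to require arguing that a non-chain interval would produce a second cover. If this fails, I would fall back to the reading suggested by the surrounding text: the lemma is in the spirit of the preceding beat-point theorem, where cover chains are built inductively. In that reading I would accept that $F_1$ is generated by successive covers, exactly as in that proof.

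Granting this, $F_x\cup U_x=C_x$ for every $x$. Hence the comparability classes are exactly the chains $C_x$, which are the cosets $x\langle a\rangle$. Distinct cosets are disjoint and pairwise incomparable, so each $C_x$ is open and closed. The space is therefore the disjoint union of these chains. Choosing one point from each chain gives a maximal antichain, so the number of chains is $\mathrm{width}(X)$ (compare Lemma~\ref{lem_radius_menor_width}). If $X$ is connected, there is only one component, so $X=C_1$ is a single non-finite chain.
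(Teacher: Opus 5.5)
There is a genuine gap, and it is exactly the step you flagged: the claim $F_1=\{a^n : n\geq 0\}$, i.e.\ that every $y>1$ satisfies $a\leq y$ and is reached from $1$ by finitely many covers. Nothing in the hypotheses gives this. In fact the lemma is false as stated, so the gap cannot be closed.

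Take $X=\mathbb{Z}\oplus\mathbb{Q}$ with addition and the coordinatewise order, as in Example~\ref{ex_z_z} but with $\mathbb{Q}$ as the second factor.
\begin{itemize}
\item Addition is order preserving for the product order, so $X$ is an Alexandroff paratopological group.
\item It is connected, since any two points have a common upper bound.
\item It is non-compact, since the open covering $\{U_{(n,n)}\}_{n\in\mathbb{N}}$ has no finite subcovering.
\item If $(0,0)<(c,d)$, then $(c,d/2)$ lies strictly in between when $d>0$, and $(1,0)$ does when $d=0$ and $c\geq 2$. So $(1,0)$ is the unique cover of $(0,0)$ and $r(X)=1$.
\end{itemize}
Yet $(1,0)$ and $(0,1)$ are incomparable, so $X$ is not a chain. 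Its width is even infinite, as the antichain $\{(n,-n)\}_{n\in\mathbb{Z}}$ shows.

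In your notation, $y=(0,1)>1$ but $a=(1,0)\not\leq y$. The open interval between $1$ and $y$ is the dense set $\{(0,q): 0<q<1\}$. A dense interval coexisting with a unique cover is perfectly consistent, which is why your contradiction attempt had nowhere to go.

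The fallback to the beat-point proof does not rescue the step. That induction only produces $a,a^2,a^3,\dots$ and never shows that they exhaust $F_1$. Consider lexicographically ordered $\mathbb{Z}\times\mathbb{Z}$, which is a chain with $r=1$ and $a=(0,1)$. There the element $(1,0)$ lies above every $a^n$. So your intermediate claim that the comparability classes are the cosets $x\langle a\rangle$ fails even in a case where the lemma's conclusion holds.

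The paper gives no argument beyond saying the lemma follows easily from the definition of radius, so the same unstated assumption is hidden there. Several parts of your proposal are correct:
\begin{itemize}
\item the unique upper cover of $x$ is $xa$;
\item the unique lower cover is $xa^{-1}$, since inversion is an order-reversing bijection;
\item $a$ has infinite order;
\item the cosets $x\langle a\rangle$ are two-sided infinite chains of covers.
\end{itemize}
These give a complete proof once you add a local finiteness hypothesis. For example, assume every interval $[x,y]$ is finite, or merely that every $y>1$ is joined to $1$ by a finite chain of covers. Then a maximal chain $1\prec c_1\prec\cdots\prec c_n=y$ forces $c_k=a^k$ by uniqueness of covers. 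Hence $F_1=\{a^n:n\geq 0\}$ and, by inversion, $U_1=\{a^{-n}:n\geq 0\}$. Your final step then yields the stated conclusion: the cosets are pairwise incomparable chains, and a set of coset representatives is a maximal antichain. Without such a hypothesis, the statement has to be corrected rather than proved.
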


In general, the notion of width need not to be equal to the notion of radius. For instance, see $\mathbb{Z}\oplus\mathbb{Z}$ from Example \ref{ex_z_z}, $\textnormal{width}(\mathbb{Z}\oplus \mathbb{Z})=\infty$ and $r(\mathbb{Z}\oplus \mathbb{Z})=2$. This also happens even if the width is finite as we demonstrate with the following example.

\begin{ex}\label{ex_generalizacion_join}
    Let $\mathbb{Q} \oplus \mathbb{Z}_n$. Consider the partial order defined by
\[
(a,b) \le (c,d) \quad \text{if and only if} \quad a \le c \ \text{and}\ b = d.
\]
With this relation, the multiplication map $m$ from Definition~\ref{def_topological group} 
is continuous, and therefore $\mathbb{Q} \oplus \mathbb{Z}_n$ is a non-connected non-compact 
Alexandroff paratopological group. Moreover, $\operatorname{width}(\mathbb{Q} \oplus \mathbb{Z}_n) = n$
and $r(\mathbb{Q} \oplus \mathbb{Z}_n) = 0$. By replacing $\mathbb{Q}$ with $\mathbb{Z}$, 
we obtain $\operatorname{width}(\mathbb{Z} \oplus \mathbb{Z}_n) = n$ and 
$r(\mathbb{Z} \oplus \mathbb{Z}_n) = 1$. Note that $\mathbb{Z} \oplus \mathbb{Z}_n$ is neither connected nor hyperconnected.
\end{ex}

\begin{rem}\label{rem_generalizacion_join}
Example \ref{ex_ancho_2} can be easily generalized to width $n<\infty$ by considering the non-Hausdorff join of antichains with $n$ points $\{ x_1,...,x_n\}$. This can also be seen from $\mathbb{Z}\oplus \mathbb{Z}_n$ in Example \ref{ex_generalizacion_join}. We just need to consider the partial order given as follows:
\[
(a,b) \le (c,d) \quad \text{if and only if} \quad a \le c \ \text{and}\ b = d \ \text{or } a<c.
\]

For that case, a deep study about the properties of $\bigcup_{i=1}^n U_{x_i}$ has been carried out in \cite{chocano2025riordan}. This also proves the non-triviality and the interest of this family of spaces.
\end{rem}

\begin{prop}
    Let $X$ and $Y$ be two non-compact Alexandroff paratopological groups. If $r(X),r(Y)<\infty$, then $r(X\times Y)\leq 2r(X)r(Y)$.
\end{prop}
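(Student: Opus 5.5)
The plan is to compute $r(X\times Y)$ exactly, by describing the covering relation of the product order, and then compare it with $2r(X)r(Y)$. First I will note that $X\times Y$ is a non-compact Alexandroff paratopological group. Its order is the product order $(x,y)\le (a,b)$ iff $x\le a$ and $y\le b$, and multiplication is monotone in each coordinate. So Definition \ref{df_radius} applies, and by Theorem \ref{thm_strong_local_homogeneuos} it is enough to look at the identity $(1,1)$.

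The key step is a description of the upper covers of $(1,1)$. I claim that $(1,1)\prec (a,b)$ if and only if either $a=1$ and $1\prec b$ in $Y$, or $b=1$ and $1\prec a$ in $X$.
\begin{itemize}
\item[(i)] If $(1,1)<(a,b)$ with $a\neq 1$ and $b\neq 1$, then $(1,1)<(a,1)<(a,b)$, so $(a,b)$ is not a cover.
\item[(ii)] If $a=1$, then any $(c,d)$ with $(1,1)<(c,d)<(1,b)$ must have $c=1$ and $1<d<b$. Hence $(1,1)\prec(1,b)$ iff $1\prec b$; the case $b=1$ is symmetric.
\end{itemize}
This gives a bijection between $r((1,1))$ and the disjoint union $(\{1\}\times r(1_Y))\sqcup (r(1_X)\times\{1\})$. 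Therefore
\[
r(X\times Y)=r(X)+r(Y).
\]

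It remains to compare $r(X)+r(Y)$ with $2r(X)r(Y)$. If $r(X),r(Y)\ge 1$, then $r(X)\le r(X)r(Y)$ and $r(Y)\le r(X)r(Y)$, and adding these gives the claimed bound.

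The main obstacle is the degenerate case where one radius is $0$, and here the inequality as stated appears to fail. Take $X=\mathbb{R}$ from Example \ref{ex_reales_orden_usual}, with $r(X)=0$, and $Y=\mathbb{Z}$, with $r(Y)=1$. Then $(0,0)\prec(0,1)$ in $\mathbb{R}\times\mathbb{Z}$, so $r(X\times Y)=1$, while $2r(X)r(Y)=0$. So the argument proves $r(X\times Y)\le 2r(X)r(Y)$ exactly under the extra hypothesis $r(X),r(Y)\ge 1$. In general the correct and sharper statement is the identity $r(X\times Y)=r(X)+r(Y)$, and I would flag that the proposition needs this extra hypothesis.
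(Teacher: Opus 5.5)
Your argument is the same as the paper's. The paper's whole proof is the remark that $(1,1)\prec(1,y_j)$ and $(1,1)\prec(x_i,1)$, where $x_i$ and $y_j$ are the upper covers of $1$ in $X$ and $Y$. Exhibiting those covers only gives $r(X\times Y)\ge r(X)+r(Y)$. Your step (i), which rules out covers $(a,b)$ with $a\neq 1\neq b$, is what turns this into an upper bound, and the paper leaves it unstated.

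Your counterexample is valid. The paper itself records $r(\mathbb{R})=0$. In $\mathbb{R}\times\mathbb{Z}$ the only upper cover of $(0,0)$ is $(0,1)$, so $r(\mathbb{R}\times\mathbb{Z})=1>0=2r(\mathbb{R})r(\mathbb{Z})$, and the proposition is false as stated.

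One small refinement of your repair. Since $r(X\times Y)=r(X)+r(Y)$, the stated bound also holds when both radii are $0$. So it holds exactly when $r(X)=0$ if and only if $r(Y)=0$. The hypothesis $r(X),r(Y)\ge 1$ is sufficient but not the sharp condition, and the identity $r(X\times Y)=r(X)+r(Y)$ is the better statement in any case.
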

\begin{proof}
    Let $r(X)=n$, $r(Y)=m$. Then there are only  $1\prec x_i$ with $i=1,...,n$ elements in $X$. Similarly, there are only $1\prec y_j$ with $j=1,...,m$ elements in $Y$. Clearly, $(1,1)\prec (1,y_i),(x_i,1)$.
\end{proof}

\begin{lem}\label{lem_ancho_anticadena_1}
    The width of a non-compact Alexandroff paratopological group $X$ is provided by the number of elements in a maximal antichain containing $1$.
\end{lem}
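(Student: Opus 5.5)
The plan is to exploit that every left translation $L_x$ is an order automorphism of $X$, as in Theorem~\ref{thm_strong_local_homogeneuos}. The width of $X$ is the supremum (or maximum) of the cardinalities of antichains in $X$. The statement should therefore be read as: there is a maximal antichain $A_1$ containing $1$ with $|A_1|=\textnormal{width}(X)$, so that the width can be computed by looking only at antichains through the identity.

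The key step is a transport of antichains. Let $A$ be any antichain of $X$ and pick any $a\in A$. Then $L_{a^{-1}}$ is a homeomorphism, hence an order isomorphism with order-preserving inverse $L_a$. Consequently $L_{a^{-1}}(A)=a^{-1}A$ is again an antichain. It has the same cardinality as $A$ and contains $a^{-1}a=1$.

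Maximality is also transported. If $A$ is a maximal antichain and $a^{-1}A\cup\{z\}$ were an antichain for some $z\notin a^{-1}A$, then applying $L_a$ would show that $A\cup\{az\}$ is an antichain strictly larger than $A$, which is a contradiction. Hence maximal antichains correspond bijectively, via left translations, to maximal antichains through $1$, and cardinalities are preserved. It follows that
\[
\textnormal{width}(X)=\sup\{|A| : A \text{ antichain}\}=\sup\{|A| : A \text{ maximal antichain with } 1\in A\}.
\]
When the width is attained by some antichain $A$, we may first enlarge $A$ to a maximal antichain by Zorn's lemma; its cardinality cannot exceed the width, so it stays the same. Translating then gives a maximal antichain $A_1\ni 1$ realizing the width.

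The only delicate point is interpretive rather than technical. Different maximal antichains through $1$ may have different sizes; in $\mathbb{Z}\oplus\mathbb{Z}$, for example, $\{(0,0)\}$ is not maximal, but both finite and infinite maximal antichains could a priori appear. The lemma should therefore be stated, and proved, as ``some maximal antichain containing $1$ attains the width''. I would make this explicit, using the paper's convention that the width is the cardinality of a largest antichain. If the width is an infinite supremum that is not attained, I would note that the equality still holds as a supremum over antichains containing $1$.
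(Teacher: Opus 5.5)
Your argument is correct, and its core step is the same as the paper's: translate an antichain $A$ of maximum cardinality by $L_{a^{-1}}$, $a\in A$, to obtain an antichain of the same cardinality through $1$. You differ in the last step, and your version is the sound one. The paper fixes an \emph{arbitrary} maximal antichain $A_1\ni 1$ and concludes $|A|\le |A_1|$ ``by the maximality of $A_1$''. But maximality is only with respect to inclusion, and $a^{-1}A$ need not be contained in $A_1$, so that step confuses maximal with maximum. You instead enlarge (via Zorn) and translate the largest antichain itself, which gives exactly the existential statement; that is what is actually true.

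Your caution about maximal antichains of different sizes is justified, but $\mathbb{Z}\oplus\mathbb{Z}$ does not witness it. There any finite antichain $\{(a_i,b_i)\}$ with $a_1<\cdots<a_k$ extends by $(a_k+1,b_k-1)$, so all maximal antichains are countably infinite. A genuine witness is $(\mathbb{Z},+)$ with $x\le y$ iff $y-x\in\{3a+5b : a,b\geq 0\}$. This is a connected non-compact Alexandroff paratopological group in which distinct $x,y$ are incomparable iff $|x-y|\in\{1,2,4,7\}$. Both $\{0,1,2\}$ and $\{0,7\}$ are maximal antichains containing the identity, of sizes $3=\textnormal{width}$ and $2$. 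So the ``every maximal antichain through $1$'' reading of the lemma fails, along with the paper's final inequality, and your existential formulation is the one to state.
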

\begin{proof}
    Let $A_1$ denote a maximal antichain containing $1$. Then, it is clear that $|A_1|\leq \textnormal{width}(X)$. Suppose that there exists an antichain $A$ that does not contain $1$ such that $|A|=\textnormal{width}(X)$. Let $x\in A$. Since $L_{x^{-1}}$ is a homeomorphism, it preserves antichains. Therefore, $L_{x^{-1}}(A)$ is an antichain containing $1$ which yields that $|A|\leq |A_1|$ by the maximality of $A_1$.
\end{proof}

\begin{rem}\label{rem_recubrimiento_anticadenas_c}
    For any Alexandroff paratopological group $X$, one has that $X=\bigcup_{x\in A_1}C_x$.
\end{rem}


We now proceed to classify the non‑compact Alexandroff paratopological groups for which the radius equals the width. 

\begin{thm}\label{thm_ancho_finito_clasificacion_join}
    Let $X$ be a connected non-compact Alexandroff paratopological group. If $r(X)=\textnormal{width}(X)<\infty$  then $X$ is homeomorphic to $$\cdots\circledast Y_n\circledast Y_n\circledast Y_n \circledast\cdots, $$
    where $Y_{n}$ is an antichain of $n$ elements.
\end{thm}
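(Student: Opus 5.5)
The plan is to show that the covering set $r(1)=\{y : 1\prec y\}$ is a maximal antichain of $n=r(X)=\mathrm{width}(X)$ elements. We then use left translations to propagate this local picture to every point, so that $X$ decomposes into ``levels'' indexed by $\mathbb{Z}$, each an $n$-element antichain, with every element of one level below every element of the next.

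\textbf{Step 1 (the upper cover of $1$ is a full level).} The set $r(1)$ is an antichain of $n$ elements, since two elements covering $1$ cannot be comparable. Because $n=\mathrm{width}(X)$ is finite, $r(1)$ is a maximum antichain. By Lemma~\ref{lem_ancho_anticadena_1}, every maximal antichain through a point has exactly $n$ elements. Hence every $z\in X$ is comparable with some element of $r(1)$.

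\textbf{Step 2 (the covers of a level all coincide).} Take $y,y'\in r(1)$. I want to show that $r(y)=r(y')$ and that every element of $r(y)$ lies above all of $r(1)$. First, by Theorem~\ref{thm_strong_local_homogeneuos} we have $|r(y)|=n$, and by Step 1 applied at $y$ (via $L_y$), $r(y)$ is again a maximum antichain. Next, take $w\in r(y)$. Maximality of the antichain $r(1)$ forces $w$ to be comparable with $y'$. The possibility $w<y'$ is impossible: it would give $1<y<w<y'$, contradicting $1\prec y'$. The possibility $w=y'$ is impossible because $y'\not>y$. Hence $w>y'$. So every element of $r(y)$ lies above every element of $r(1)$.

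It remains to show that each element of $r(y)$ actually covers $y'$. If $y'<u<w$ for some $u$, then $u\notin r(1)$. I would rule out such a $u$ by width counting: $u$ must be comparable to some element of $r(y)$ and to some element of $r(1)$, and using the finiteness of $\mathrm{width}$ together with $|r(\cdot)|=n$, one gets an antichain of size exceeding $n$ or a contradiction with a covering relation. \textbf{This step is the main obstacle.} To handle it, I would first try to prove the cleaner statement that $F_y\setminus\{y\}=F_{r(y)}$ and that $F_{y}\cap F_{y'}\supseteq r(y)$, applying the same maximality argument repeatedly.

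\textbf{Step 3 (building the levels).} Define $Y_0=\{1\}\cup(\text{the antichain of }n\text{ elements containing }1)$. More precisely, by Lemma~\ref{lem_opuesto_u_1} the set $\{y^{-1}: y\in r(1)\}$ is the set of lower covers of $1$. Any $n$-element antichain containing $1$ is the set of lower covers of $r(1)$, and I take this as $Y_0$. Set $Y_1=r(1)$ and inductively $Y_{k+1}=r(y)$ for any $y\in Y_k$; this is well defined by Step 2. Define $Y_{-k}$ symmetrically using the opposite order, which is legitimate because $U_1^{op}=F_1$. Step 2 then gives $a<b$ for all $a\in Y_k$, $b\in Y_{k+1}$, so the union of the levels is exactly the ordered join $\cdots\circledast Y_n\circledast Y_n\circledast\cdots$.

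\textbf{Step 4 (exhaustion).} To see that every point of $X$ lies in some $Y_k$, I would use connectedness and Theorem~\ref{every_paratopological_alexandroff_hyperconexosiiiiii}. Any $z$ is reached from $1$ by a fence, and each step $a<b$ with $a\in Y_k$ forces $b\in Y_j$ for some $j>k$, because Step 2 shows that every element above $Y_k$ is above an element of $Y_{k+1}$ and every chain between consecutive levels is saturated. Similarly, each step downwards stays within the levels. Therefore $X=\bigcup_k Y_k$, and the identification of $X$ with the join is order-preserving in both directions, hence a homeomorphism.
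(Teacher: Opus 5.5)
\textbf{Gap in Step 2.} The failure comes earlier in Step 2 than the point you flagged as the main obstacle. Maximality of the antichain $r(1)$ only tells you that $w$ is comparable with \emph{some} element of $r(1)$. Since $w>y$ with $y\in r(1)$, this is already satisfied, so nothing forces $w$ to be comparable with $y'$.

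The claim is in fact false, and so is the statement. Take $X=\mathbb{Z}$ with $a\le b$ iff $b-a\in P=\{0,2,3,4,\dots\}$.
\begin{itemize}
\item $P$ is an additive submonoid with $P\cap(-P)=\{0\}$, so $\le$ is a translation-invariant partial order and addition is monotone. Hence $X$ is an Alexandroff paratopological group.
\item It is connected, since $k<k+3>k+1$.
\item It is non-compact, since each $U_x$ is bounded above, so the cover $\{U_x\}$ has no finite subcover.
\item Distinct points are incomparable iff they differ by $1$, so $\mathrm{width}(X)=2$. Also $r(0)=\{2,3\}$, so $r(X)=2$.
\end{itemize}
Yet $r(2)=\{4,5\}\neq\{5,6\}=r(3)$, and $4$ is incomparable with $3$.

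Moreover, $X$ is not homeomorphic to any join of antichains. In such a join, ``equal or incomparable'' is an equivalence relation (same level). Here, writing $\parallel$ for incomparability, $0\parallel 1$ and $1\parallel 2$ but $0<2$.

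Your strategy is exactly the paper's. Its proof asserts ``by cardinality, $L_{x_i}(r(1))=L_{x_j}(r(1))$'', which is your Step 2 and fails in this same example.

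\textbf{Gap in Step 4.} There is a second, independent gap here, which the paper also passes over with ``from this, it can be deduced''. Even when the levels behave well, iterated upper covers need not exhaust $X$. Take $\mathbb{Z}^2$ with the lexicographic order, a totally ordered group. Then $r(X)=\mathrm{width}(X)=1$ and your levels are $Y_k=\{(0,k)\}$. But $(1,0)>(0,k)$ for every $k$, and $(1,0)$ belongs to no level.

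Knowing that every element above $Y_k$ lies above some element of $Y_{k+1}$ only lets you climb forever below $(1,0)$. Exhaustion would require every interval $[a,b]$ to be finite. Neither connectedness nor hyperconnectedness gives this, since a chain is hyperconnected. So even for $n=1$ the $\mathbb{Z}$-indexed join conclusion fails.
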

\begin{proof}
    Let  $r(1)=\{x_1,...,x_n \}$. First, we prove that for every $y\in r(x_i)$, one has that $y>x_j$ for every $1\leq j\leq n$. The map $L_{x_i}$ is a homeomorphism for every $i$. Then, it preserves antichains and relations $\prec$, consequently, $L_{x_i}:r(1)\rightarrow r(x_i)$ is a homeomorphism for every $i$. By cardinality, we obtain that $L_{x_i}(r(1))=L_{x_j}(r(1))$. Hence, $y>x_j$ for every $y\in r(x_i)$. 

    From this, it can be deduced that the space $X$ is indeed homeomorphic to $$\cdots\circledast Y_n\circledast Y_n\circledast Y_n \circledast\cdots, $$
    where $Y_n$ is an antichain of $n$ points.

\end{proof}

As an immediate consequence of the structure provided in Theorem \ref{thm_ancho_finito_clasificacion_join}, we obtain the following result.

\begin{cor}
    Let $X$ be a connected non-compact Alexandroff paratopological group. If $r(X)=\textnormal{width}(X)<\infty$, then $X$ is contractible.
\end{cor}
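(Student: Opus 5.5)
The plan is to write down an explicit homotopy from $1_X$ to a constant map. The earlier comparability theorem only handles finite fences of comparable maps, and no finite fence will work here, because $X$ is unbounded in both directions. So I will use a sequence of ``clamping'' maps and concatenate infinitely many comparable steps on the intervals $[1/(k+1),1/k]$.

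By Theorem~\ref{thm_ancho_finito_clasificacion_join}, $X=\bigsqcup_{k\in\mathbb{Z}}Y^{(k)}$, where each level $Y^{(k)}$ is an antichain of $n$ points and $a<b$ whenever $a\in Y^{(k)}$, $b\in Y^{(l)}$ with $k<l$. Write $\ell(z)$ for the level of $z$ and fix $p_k\in Y^{(k)}$ for every $k$. For integers $a\le b$, define the clamp $c_{a,b}:X\to X$ by
\[
c_{a,b}(z)=\begin{cases} p_a & \text{if } \ell(z)<a,\\ z & \text{if } a\le \ell(z)\le b,\\ p_b & \text{if } \ell(z)>b.\end{cases}
\]
I first check that $c_{a,b}$ is order-preserving; this is the only place where the structure of the join is used. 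If $z<w$ lie in different levels, their images either lie in different levels in the same order, or one of them is a clamped point $p_a$ or $p_b$. In the latter case the image of the lower element sits in a strictly lower level, or the images coincide, because $p_a$ lies below every point of level $>a$ and $p_b$ above every point of level $<b$.

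Next I check comparabilities. First, $c_{-(k+1),k}\le c_{-(k+1),k+1}$: on level $k+1$ we compare $p_k<z$, on levels $>k+1$ we compare $p_k<p_{k+1}$, and elsewhere the maps agree. Symmetrically, $c_{-(k+1),k}\ge c_{-k,k}$. So $c_{-(k+1),k+1}$ and $c_{-k,k}$ are joined by two comparable steps, and by the homotopy theorem for comparable maps they are homotopic. Concretely, for $f\le g$ the homotopy $H(z,t)=f(z)$ for $t<1$ and $H(z,1)=g(z)$ is continuous. To finish, I use $c_{0,0}\ge c_{-1,0}$ and observe that the constant map $p_{-1}$ satisfies $p_{-1}\le c_{-1,0}$ pointwise. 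Hence $c_{0,0}$ is homotopic to a constant map.

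I then assemble $H:X\times[0,1]\to X$ as follows. On $[1/(k+1),1/k]$, $H$ runs through the fence $c_{-(k+1),k+1}\ge c_{-(k+1),k}\le c_{-k,k}$, with the steps placed on subintervals so that the values at the endpoints match. On the final interval $H$ contracts $c_{0,0}$ to the constant map, and $H(z,0)=z$.

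The main obstacle is continuity of $H$ at $t=0$; for $t>0$ continuity follows from local finiteness of the construction. Since $X$ is Alexandroff, it suffices to show that for every $z$ the set $H^{-1}(U_z)$ contains a neighbourhood $U_z\times[0,\varepsilon)$ of $(z,0)$. Take $k$ with $-k<\ell(z)<k$ and let $\varepsilon=1/k$. For $t<\varepsilon$ the map $H(\cdot,t)$ is one of the clamps $c_{a,b}$ with $a<\ell(z)<b$. For $z'\le z$ we then get $H(z',t)=z'$ when $\ell(z')\ge a$, and $H(z',t)=p_a$ otherwise. In the second case $p_a<z$ because $a<\ell(z)$. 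In both cases $H(z',t)\le z$, which gives continuity at $t=0$. Therefore $1_X\simeq$ const and $X$ is contractible.
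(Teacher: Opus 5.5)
\textbf{The gap: your clamps are not continuous for $n\ge 2$.} This is exactly the case not already covered by $X\cong\mathbb{Z}$. Take $z\in Y^{(a-1)}$ and $w\in Y^{(a)}$ with $w\neq p_a$. Then $z<w$, but $c_{a,b}(z)=p_a$ and $c_{a,b}(w)=w$ are distinct points of the antichain $Y^{(a)}$, hence incomparable. The same failure occurs at level $b$.

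Your case check misses an unclamped upper point lying exactly on the boundary level. The fact that $p_a$ lies below every point of level $>a$ says nothing about points of level $a$.

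The same defect breaks your last step. The map $c_{0,0}$ is not continuous, and $p_{-1}\le c_{-1,0}$ fails at every $z\in Y^{(-1)}\setminus\{p_{-1}\}$, where $c_{-1,0}(z)=z$.

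The naive repair does not help either. Suppose you send all of $\ell(z)\le a$ to $p_a$ and all of $\ell(z)\ge b$ to $p_b$. The maps become continuous, but then $c_{-(k+1),k}\le c_{-(k+1),k+1}$ compares $p_k$ with an arbitrary $z$ of level $k$, and these are incomparable. The missing idea is this: a point $z\in Y^{(k)}$ can only be moved to $p_k\neq z$ through a point comparable with both, such as $p_{k\pm 1}$. So every enlargement step needs an extra map in the fence.

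\textbf{How to repair it.} Your overall strategy is sound: infinitely many fences on shrinking time intervals, with continuity at $t=0$ checked via minimal neighbourhoods. It can be rescued in either of two ways.

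First option: let $c_{a,b}$ fix the levels $a,\dots,b$, send lower levels to $p_{a-1}$ and higher levels to $p_{b+1}$. Let $c^{+}_{a,b}$ (resp.\ $c^{-}_{a,b}$) be the same map with upper cap $p_{b+2}$ (resp.\ lower cap $p_{a-2}$). These maps are order-preserving, and they satisfy
$c_{a,b+1}\le c^{+}_{a,b}\ge c_{a,b}$ and $c_{a-1,b}\ge c^{-}_{a,b}\le c_{a,b}$.
Moreover, $c_{0,0}$ dominates the constant map at $p_{-1}$. Your argument at $t=0$ then goes through essentially verbatim. Every map used for small $t$ fixes an interval of levels containing $\ell(z)$ and sends the levels below it to a point strictly below $z$.

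Second option, more economical: the maps $r(z)=p_{\ell(z)}$ and $s(z)=p_{\ell(z)+1}$ are order-preserving with $\mathrm{id}_X\le s\ge r$. Hence $X$ is homotopy equivalent to the chain $\{p_k\}_{k\in\mathbb{Z}}\cong(\mathbb{Z},\le)$, and on that chain your original clamps and concatenation work as written.

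\textbf{Comparison with the paper.} Either repair gives a self-contained proof that handles the bi-infinite join directly. The paper's proof only cites the contractibility of $Y_n\circledast Y_n\circledast\cdots$ from \cite{chocano2025riordan}.
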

\begin{proof}
    Immediate consequence of \cite{chocano2025riordan}, where the authors prove that $Y_{n}\circledast Y_n\circledast\cdots$ is contractible.
\end{proof}

\bibliography{bibliografia}
\bibliographystyle{plain}

\newcommand{\Addresses}{{
  \bigskip
  \footnotesize

  \textsc{ P.J. Chocano, Departamento de Matemática Aplicada, Ciencia e Ingeniería de los Materiales y Tecnología Electrónica, ESCET Universidad Rey Juan Carlos, 28933 Móstoles (Madrid), Spain}\par\nopagebreak
 \textit{E-mail address}: \texttt{pedro.chocano@urjc.es}

  \textsc{T. Borsich, Departamento de Matemática Aplicada, Ciencia e Ingeniería de los Materiales y Tecnología Electrónica, ESCET Universidad Rey Juan Carlos, 28933 Móstoles (Madrid), Spain}\par\nopagebreak
 \textit{E-mail address}: \texttt{tayomara.gonzalez@urjc.es}

}}

\Addresses

\end{document}